\RequirePackage{amsmath}
\documentclass[runningheads]{llncs}
\usepackage[T1]{fontenc}
\usepackage{graphicx}
\usepackage{amssymb}
\usepackage[colorlinks=true, allcolors=blue]{hyperref}
\usepackage[numbers]{natbib}
\usepackage{cleveref}
\usepackage[dvipsnames]{xcolor}
\usepackage{comment}

\newtheorem{open}{Open Question}

\newcommand{\ceil}[1]{\left\lceil #1 \right\rceil}
\newcommand{\floor}[1]{\left\lfloor #1 \right\rfloor}

\newcommand{\propcond}{\textsc{Bal}}
\newcommand{\weakpropcond}{\textsc{WeakBal}}

\newcommand{\range}[3]{\{#1_i: #2\leq i\leq #3\}}
\newcommand{\rangeA}[2]{\range{A}{#1}{#2}}
\newcommand{\rangeB}[2]{\range{B}{#1}{#2}}
\newcommand{\rangeC}[2]{\range{C}{#1}{#2}}
\newcommand{\rangeD}[2]{\range{D}{#1}{#2}}

\newif\ifRowIsPlayer
\RowIsPlayerfalse

\begin{document}
\title{Perpetually Fair Assignments Via Balanced Sequences of Permutations}
%
%

\author{Terrence Adams\inst{1} \and
Erel Segal-Halevi\inst{2}}
\authorrunning{T. Adams and E. Segal-Halevi}
%
\institute{University at Albany, State University of New York,
1400 Washington Avenue, Albany, NY 12222.
\email{terry@ieee.org}
\and
Ariel University, Ariel 40700, Israel.
\email{erelsgl@gmail.com}}

\maketitle              
\begin{abstract}
There is a set of $n$ indivisible items (goods or chores), and a set of $n$ players. 
Each day, a single item should be assigned to each player.
Assignments based on latin squares guarantee fairness after every $n$ days; our goal is to ensure fairness after every single day.

We present two 'balance' conditions on latin squares.
Informally, a latin square is balanced if its top rows and leftmost columns contain all $n$ labels; this ensures that all $n$ players receive one of the top items in one of the early days.
One such condition can always be satisfied, but is arguably too weak; a second condition is strong,  and can be satisfied for all $n\leq 12$, but cannot be satisfied for some larger values of $n$, including all $n>108$.

We show that the second balance condition guarantees that the cumulative assignment is always \emph{proportional up to one item (PROP1)}, where proportionality holds in a strong ordinal sense --- for every valuations that are consistent with the item ranking. 

Finally, we present a weaker balance condition on a sequence, that guarantees ordinal proportionality up to two items (PROP2). Whether or not this condition can be satisfied for all $n$ remains an open question.
\\
~\\
\emph{This is the full version of a paper accepted to SAGT 2026 conference.}

\keywords{Fair division \and Repeated assignment \and latin squares \and Proportionality \and Balanced sequences}
\end{abstract}
\section{Introduction}
Some $n$ house-mates need to divide the $n$ daily house-chores among them. Each housemate must do exactly one chore per day. What would be a fair way to assign the chores?
Ideally, a fair allocation would give each housemate the same amount of work. But this might be impossible, as some chores are easier and some are harder; in every single day, some housemates must do the hard chores. 
However, if the same housemates do the same unwanted chores day after day, they will rightfully complain that this \emph{repeated assignment} is unfair. 

A similar problem may arise in different contexts. 
For example, suppose grandpa wants to divide sweets to his grandchildren each weekend. He has to decide who gets the sweets first, who is second, and so on. Each week, the children who are last to get their sweets might feel somewhat unhappy; but if the same children are last in the line week after week, they will naturally feel that the allocation is unfair.
Similar considerations are applicable when assigning seats in classroom to pupils --- a task which may be repeated in different terms.

In all these settings, the same assignment problem repeats many times. Whereas some inequality can be tolerated in every single assignment, we expect it to eventually ``cancel out'' as the same process repeats long enough. We call this problem \emph{Fair Repeated Assignment}. 

In \Cref{sec:setting} below we define the problem formally. 
In the following sections \ref{sec:latin-square}--\ref{sec:ordinal-propc} we present our main results on existence and non-existence of fair repeated assignments.
In \Cref{sec:related} we discuss various other fair allocation settings studied in the literature and compare them to our setting.

\section{Setting}
\label{sec:setting}
There is a set $N$ of $n$ \emph{players}, $N := \{A_1,\ldots,A_n\}$.
There is a set $M$ of $n$ indivisible \emph{items}, $M := [n] = \{1,\ldots,n\}$. We assume that $1$ is the best item (e.g. the easiest chore), $2$ is the second-best item, etc.
This does not lose much generality, as instead of dividing the actual items, we can divide positions in a sequence by which the players pick items, so that the player who receives ``$1$'' is the first to pick an item, the player who receives ``$2$'' picks second, etc. Clearly, an earlier position in the picking sequence is better for each player.%
\footnote{
	This is essentially the ``Identical Order Transformation'' introduced by \citet{bouveret2016characterizing}.
	\citet{cookson2025temporal} prove that this transformation preserves the approximate proportionality condition PROP1.
}

An \emph{assignment} is a bijection between $M$ and $N$; equivalently, a permutation of $M$. 
A \emph{repeated assignment} is a finite sequence of assignments. 
We say that each assignment is implemented in a different \emph{day}.
The multiset of items that a particular player $i$ has after a repeated assignment is called a \emph{bundle}. The bundle of $i$ after day $t$ is denoted $Z_i^t$. 
The multiset of all items allocated up to day $t$ is denoted $Z_N^t := \uplus_{i\in N}Z_i^t = $ the items $1,\ldots,n$, each with multiplicity $t$.

We often present a repeated assignment in a table in which each column represents a day and each row represents an item. The number in column $t$ and row $j$ represents the player who receives item $j$ in day $t$; see \Cref{tab:repeated-assignment-6-6}.
\begin{table}
\begin{center}
\begin{tabular}{cc}
    \begin{minipage}
    {.43\linewidth}
	\begin{tabular}{|c|c|c|c|c|c|}
	\hline
\ifRowIsPlayer
	1&  2&  3&  4&  5&  6\\
	2&  3&  4&  5&  6&  1\\
	3&  4&  5&  6&  1&  2\\
	4&  5&  6&  1&  2&  3\\
	5&  6&  1&  2&  3&  4\\
	6&  1&  2&  3&  4&  5\\
\else 
	1&  6&  5&  4&  3&  2\\
	2&  1&  6&  5&  4&  3\\
	3&  2&  1&  6&  5&  4\\
	4&  3&  2&  1&  6&  5\\
	5&  4&  3&  2&  1&  6\\
	6&  5&  4&  3&  2&  1\\
\fi
	\hline
	\end{tabular}
	\end{minipage}	
		&  
	\begin{minipage}{.57\linewidth}
	\begin{tabular}{|c|c|c|c|c|c|c|c|}
	\hline
	1&  8&  5&  4&  3&  6& 7& 2\\
	2&  7&  6&  3&  4&  5& 8& 1 \\
	3&  6&  7&  2&  1&  8& 5& 4\\
	4&  5&  8&  1&  2&  7& 6& 3\\
	5&  4&  1&  8&  7&  2& 3& 6\\
	6&  3&  2&  7&  8&  1& 4& 5\\
	7&  2&  3&  6&  5&  4& 1& 8\\
	8&  1&  4&  5&  6&  3& 2& 7\\
	\hline
	\end{tabular}
	\end{minipage}	
\end{tabular}
\end{center}
\caption{
Examples of permutation sequences (repeated assignments) of $6$ items (left) and $8$ items (right).
Each column represents an assignment in a single day; each row represents the players who get a single item throughout all days.
\label{tab:repeated-assignment-6-6}
}
\end{table}

For any bundle $Z$ and any $k\leq |Z|$, we denote by $Z[k]$ the $k$-th highest valued item in $Z$, so that $Z[1]$ is the best item in $Z$, $Z[2]$ the second-best, etc.  
The matching between the players and their bundles is called an \emph{allocation}.

\section{Fairness After $n$ Days: latin square}
\label{sec:latin-square}
We are looking for a formal condition that captures the intuitive notion of ``fairness''. 
First, suppose that we are now at the end of day $n$, and we consider the aggregate bundle of $n$ items that is held by each player. We define the allocation at that point as ``fair'' if and only if each player has exactly one copy of each item. It is easy to agree that this condition is sufficient for fairness, as it implies that all players have exactly the same bundle. 
We claim that it is also necessary. This is because 
we make no assumptions on how much exactly item $1$ is better than item $2$, $2$ is better than $3$, etc. Hence, we cannot assume e.g. that a player who received no copy of item $1$, can be compensated by receiving two copies of item $2$. Only allocations that give each item to each player exactly once are guaranteed to be fair.

This condition is satisfied in both examples in \Cref{tab:repeated-assignment-6-6}, where each row is a permutation of $[n]$. 
As each column is also a permutation of $[n]$, the entire repeated assignment table is a \emph{latin square}. There are many ways to construct latin squares, so this requirement is easy to satisfy.

Clearly, in fewer than $n$ days, it is impossible to give all players exactly the same bundle. Suppose item 1 is very valuable (a diamond) and the other items are much less valuable (stones). Then at any day before the $n$-th day, some players (those who did not get the diamond so far) will naturally feel that they did not get their fair share.
~
We could tell these players to wait patiently until day $n$; but  $n$ days may be too much to wait. We would like the allocation to be ``as fair as possible'' after each day --- not only after day $n$.

Below, we explore different ways to define what this fairness means.

\section{Fairness After Every Day: Top-Balance}
\label{sec:top-balance}
A natural fairness requirement is to require that the top items are allocated among the players in a ``balanced'' way: in the first two days, each player should get at least one item from the top half (the highest-valued $\ceil{n/2}$ items); 
in the first three days, each player should get at least one item from the top third (the highest-valued $\ceil{n/3}$ items); and so on.%
\footnote{
	The top-balance condition is asymmetric with respect to time, as it assesses fairness based only on the first $t$ days, and not on the last $t$ days or any other $t$ consecutive days. 
	This asymmetry makes sense when the allocated items are physical items that the players keep, so after $t$ days, each player owns a bundle of $t$ items, which should be sufficiently valuable. 
	A stronger variant of top-balance, that considers simultaneously every sequence of $t$ consecutive days, is briefly discussed at the end of \Cref{sec:ordinal-propc}.
}
 We now state this succinctly.
\begin{definition}
A permutation sequence is called \emph{top-balanced} if after every day $t\in[n]$, every player $i$ has at least one item from the top $\ceil{n/t}$.
Formally,
\begin{align*}
    Z_i^t[1] \leq \ceil{n / t}.
\end{align*}
\end{definition}
In terms of the latin square, top-balance means that, for all $t\in[n]$, the top-left rectangle with $t$ columns and $\ceil{n / t}$ rows must contain all integers $1,\ldots,n$.
The condition is violated by the sequences of \Cref{tab:repeated-assignment-6-6}:
for $n=6$ and $t=2$, the top-left $3\times 2$ rectangle does not contain $5$; for $n=8$ and $t=3$, the top-left $3\times 3$ rectangle does not contain $4$.
Are there other sequences that satisfy it? The answer is yes, and moreover, the condition can be satisfied in combination with the latin square requirement.

\begin{theorem}
\label[theorem]{lem:top-slots}
For every positive integer $n$, there exists an $n\times n$ latin square that is also top-balanced.
\end{theorem}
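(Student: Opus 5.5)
The plan is to reduce the theorem to a one-dimensional covering problem on the cycle $\mathbb{Z}_n$, using a latin square made of cyclic shifts of one column. Fix a permutation $\sigma$ of $\mathbb{Z}_n=\{0,\dots,n-1\}$ with $\sigma(1)=0$. Define the square by letting row $j$, column $t$ contain player $1+\bigl((j-1+\sigma(t)) \bmod n\bigr)$.

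\begin{itemize}
\item Each column is a cyclic shift of $1,\dots,n$, so every column is a permutation.
\item Since $\sigma$ is a bijection, each row $j$ receives the shifts $j-1+\sigma(t)$, $t=1,\dots,n$, which are all distinct. So every row is a permutation and the table is a latin square.
\end{itemize}

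Now translate top-balance. Write $m_t=\ceil{n/t}$. The top-left $m_t\times t$ rectangle consists of the cyclic arcs $\{\sigma(s),\sigma(s)+1,\dots,\sigma(s)+m_t-1\}$ for $s\le t$. These arcs cover $\mathbb{Z}_n$ exactly when every cyclic gap between consecutive points of the prefix set $P_t=\{\sigma(1),\dots,\sigma(t)\}$ is at most $m_t$. The theorem therefore follows from a \emph{prefix-gap lemma}: there is an ordering $\sigma$ of $\mathbb{Z}_n$ such that for every $t$, the points $\sigma(1),\dots,\sigma(t)$ cut the circle of length $n$ into arcs of length at most $\ceil{n/t}$. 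Here the gap from one point to the next counts the point itself plus the empty positions after it.

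To prove the lemma, I would first try to build $\sigma$ in \emph{rounds}, by induction on the scale.

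\begin{itemize}
\item For $n=2^k$, the bit-reversal (van der Corput) ordering puts the first $2^i$ points evenly spaced.
\item Between powers of two, each new point bisects a largest remaining gap.
\item For general $n$, I would scale that ordering, taking $\sigma(t)=\floor{n\cdot v_t}$ with $v_t$ the van der Corput sequence, and check the gaps directly.
\end{itemize}

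The key quantitative check is as follows. After $2^i$ points all gaps are about $n/2^i$. Each of the next $2^i$ points halves one gap. So at time $t=2^i+r$, the maximal gap is still about $n/2^i$, while the target is $\ceil{n/t}$, which can be almost twice as small. This factor-$2$ loss is the main obstacle. Pure bisection will not work, and I expect it to fail when $t$ is just above a power of two.

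To repair it, I would use uneven splitting instead of halving. Rather than placing points at prescribed positions, I would maintain the invariant that at time $t$ all gaps lie in $\{\floor{n/t},\ceil{n/t}\}$, or at most $\ceil{n/t}$. Going from time $t$ to $t+1$, a new point cannot realize this invariant alone: it cuts a single gap, while all gaps must shrink toward $n/(t+1)$. So the invariant cannot be maintained by insertion only, and the cyclic-shift framework may be too rigid.

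If that happens, I would fall back to a non-cyclic construction, keeping the same staircase viewpoint.

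\begin{itemize}
\item Let $U=\bigcup_t [m_t]\times[t]$ be the staircase of cells in row $a$, column $b$ with $a\le\ceil{n/b}$. It has $O(n\log n)$ cells.
\item Fill $U$ column by column with a partial latin rectangle. In column $t$, the top $m_t$ cells must supply every player not already covered by the current rectangle. Each newly revealed strip consists of rows $m_{t}+1..m_{t-1}$ of the old columns together with the top $m_t$ cells of the new column; the previous rectangle is not nested in the new one, so coverage must be re-established each step.
\item Choose these entries by a bipartite matching between missing players and free cells. Hall's condition should hold because each cell conflicts with only the $O(\log n)$ entries already placed in its row and column.
\item Finally, complete the partial square. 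Since the filled cells form a Young-diagram (staircase) shape occupying an $o(1)$ fraction of each row and column for large $n$, I would apply a completion result such as Ryser/Hall-style completion row by row. The small $n$ would be checked directly.
\end{itemize}

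I expect the Hall verification in the matching step to be the delicate part of this fallback.
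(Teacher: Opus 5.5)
Your reduction of the cyclic-shift construction to the prefix-gap lemma is correct, but the lemma itself is false, so no cleverer ordering can rescue this route. With $\sigma(1)=0$, the two gaps of $P_2$ sum to $n$ and each must be at most $\ceil{n/2}$, so each is at least $\floor{n/2}$. The third point lies inside only one of these arcs, so $P_3$ still has a gap of size at least $\floor{n/2}$. Top-balance at $t=3$, however, requires every gap to be at most $\ceil{n/3}$. Since $\floor{n/2}>\ceil{n/3}$ for $n=6$ and for every $n\ge 8$, no square of your cyclic form is top-balanced for these $n$. The whole argument therefore rests on your fallback.

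The fallback has a genuine gap at the completion step. Its premise, that the staircase $U$ fills only an $o(1)$ fraction of each row and column, is false. $U$ contains all of row~1 and column~1, $n-1$ cells of row~2, and $\ceil{n/2}$ cells of column~2. You may discard the columns beyond $\ceil{n/2}$, since $\ceil{n/t}=2$ for $n/2\le t<n$ and coverage only grows. Even so, covering all $n$ labels at $t=\ceil{n/2}$ forces you to pre-fill essentially all of the top two rows of those columns, about $n/2$ cells each.

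No Ryser/Hall-type theorem completes a partial latin square just because its filled cells form a Young diagram. For example, the $2\times 2$ block with rows $(1,2),(2,1)$ inside a $3\times 3$ square has such a shape and cannot be completed. The known completion results for sparse partial squares need either at most $n-1$ filled cells or at most $\epsilon n$ per row, column and symbol, and neither holds here. For the same reason, your justification of Hall's condition in the matching step (``only $O(\log n)$ conflicts per cell'') is wrong in the top rows, where a cell in column $t$ conflicts with $t-1$ entries of its row.

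The missing idea, which the paper uses, is never to leave an unfinished column behind:
\begin{enumerate}
\item Prescribe only the top $\ell\in\{\ceil{n/(t+1)}-1,\ceil{n/(t+1)}\}$ cells of column $t+1$. Maintain the invariant that the labels in the top $\ceil{n/t}-1$ rows of the first $t$ columns are pairwise distinct; this makes the top-filling greedy and free of row conflicts.
\item Immediately complete column $t+1$ to an $n\times(t+1)$ latin rectangle by a column-completion criterion: every set $I$ of unused labels must be avoided by at least $|I|$ free rows. The paper verifies this by counting ($n-\ell-t\ge t$, refined near $t\approx n/2$) for all columns up to $\ceil{n/2}$.
\item Finally, M.~Hall's theorem extends the $n\times\ceil{n/2}$ latin rectangle to a latin square, and top-balance is preserved automatically.
\end{enumerate}
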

The proof uses two known results about completing incomplete 
latin squares.  
A \emph{latin rectangle} is a rectangle with $n$ rows and $t<n$ columns, such that each integer in $\{1,\ldots,n\}$ appears exactly once in each column, and at most once in each row.
Marshall Hall \cite{hall1945existence}, using Philip Hall's marriage theorem, proved that any latin rectangle can be extended, one column at a time, to produce a latin square. In our setting, columns correspond to days; we will construct a top-balanced latin-square sequence one day at a time.
~
We will also use Theorem 4 from \cite{EULER2010535} 
to iteratively extend columns from a partial latin rectangle 
to a full latin rectangle.  For convenience, we restate it here 
as \Cref{lem:row-complete}. 

\begin{lemma}[Column completion lemma, Theorem 4 in \cite{EULER2010535}]
\label{lem:row-complete}
Let $L$ be an $n\times t$ latin rectangle where $t<n$.  

Suppose that some $\ell$ distinct labels are entered 
into some $\ell$ distinct rows in column $t+1$.
If we can insert the $n-\ell$ \emph{remaining labels} into the $n-\ell$ \emph{remaining rows} such that the outcome is an $n\times (t+1)$ latin rectangle, then we say that column $t+1$ is \emph{completable}.

The column is completable 
if and only if any subset $I$ of the remaining labels is contained in at most $(n-\ell -|I|)$ of the remaining rows.
    
Equivalently: For any subset $I$ of remaining labels, there are at least $|I|$ remaining rows that do not contain $I$. 
\end{lemma}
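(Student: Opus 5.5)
We prove the lemma by a direct reduction to Philip Hall's marriage theorem, the same tool Marshall Hall used for extending latin rectangles. Let $R$ be the set of $n-\ell$ remaining (empty) rows of column $t+1$, and let $S$ be the set of $n-\ell$ remaining labels, i.e.\ those not yet entered in column $t+1$. We tacitly assume, as the statement intends, that the $\ell$ labels already entered do not repeat a label in their own rows. Build a bipartite graph $G$ with sides $S$ and $R$. Join label $s\in S$ to row $r\in R$ exactly when $s$ does not occur in row $r$ among the first $t$ columns of $L$.

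First I will check that completions of column $t+1$ correspond exactly to perfect matchings of $G$. Given a perfect matching, put each label $s$ into its matched row. Then column $t+1$ contains each of the $\ell$ pre-entered labels and each label of $S$ exactly once, so it is a permutation of $[n]$. Each row gains a label it did not already contain, so rows stay repetition-free and the result is an $n\times(t+1)$ latin rectangle. Conversely, any completion assigns the labels of $S$ bijectively to the rows of $R$ along edges of $G$. Since $|S|=|R|$, Hall's theorem says column $t+1$ is completable if and only if $|N(I)|\ge |I|$ for every $I\subseteq S$.

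Next I will identify $N(I)$. A row $r\in R$ is adjacent to some label of $I$ if and only if at least one label of $I$ is missing from $r$, that is, if and only if $r$ does not contain all of $I$. So $N(I)$ is the set of remaining rows that do not contain $I$, and Hall's condition is literally the ``equivalently'' form of the lemma. The other form follows by complementation. The number of remaining rows containing $I$ is $(n-\ell)-|N(I)|$, so $|N(I)|\ge|I|$ holds if and only if at most $n-\ell-|I|$ remaining rows contain $I$.

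There is no real obstacle. The only points needing care are the reading of ``contains $I$'' as containing every label of $I$ (so that $N(I)$ is the set of non-containing rows), and the counting fact $|S|=|R|=n-\ell$, which lets Hall's theorem give a perfect matching rather than merely a matching saturating $S$.
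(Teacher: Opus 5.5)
Your proof is correct. The paper gives no argument for this lemma and simply imports it as Theorem 4 of \cite{EULER2010535}. Your reduction to Philip Hall's marriage theorem is therefore a self-contained replacement for that citation: label $s$ is adjacent to remaining row $r$ iff $s$ is absent from $r$, completions correspond to perfect matchings, $N(I)$ is the set of remaining rows not containing all of $I$, and the ``at most $n-\ell-|I|$'' form follows by complementation. It uses the same tool the paper already invokes, via Marshall Hall, for extending latin rectangles.

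What your version adds is that it makes explicit two conventions the paper's statement leaves implicit. First, ``$r$ contains $I$'' must mean that $r$ contains every label of $I$. This is consistent with \Cref{rem:row-complete}, where a nonempty $I$ lies in at most $t$ rows and no row contains $I$ once $|I|>t$. Second, the $\ell$ pre-entered labels must not already occur in their own rows. This is a genuine hypothesis, not a formality: without it the ``if'' direction fails. For example, take $n=3$, $t=1$, first column $(1,2,3)$, and pre-enter label $1$ in row $1$ of column $2$. Hall's condition then holds for the remaining labels and rows, yet no completion exists. The hypothesis does hold where the paper applies the lemma. Step 1 of the proof of \Cref{lem:top-slots} only places, in the top cells of column $t+1$, labels that have not yet appeared in those top rows.
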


\begin{remark}
\label{rem:row-complete}
Since $L$ is a latin rectangle with $t$ columns, any subset $I$ of labels can be contained in at most $t$ rows of $L$ (by the pigeonhole principle).
Hence, a sufficient condition for completability is that $n-\ell-|I|\geq t$.
Moreover, the condition holds trivially when $|I|>t$, so it is sufficient to check the condition for $|I|\leq t$. Hence, it is sufficient to prove that $n-\ell-t\geq t$.
\end{remark}

\begin{proof}[of Theorem \ref{lem:top-slots}]
First, we show how to define the assignment for the first half of the days, namely for columns $t\in \{1,\ldots \ceil{\frac{n}{2}}\}$,
and for the top portion of each of these columns (representing the assignment of the top items).
After the top-balance condition is satisfied for all these columns, 
we show how to append the remaining columns such that the 
entire sequence is a latin square and top-balanced. 

\paragraph{\textbf{Step 1: Constructing a top-balanced partial latin rectangle.}}
We build the top-balanced sequence and partial latin square inductively in the columns $t$.
We want to establish the following two claims inductively in $t$. 
\begin{enumerate} 
\item The set of labels in the top-$\ceil{\frac{n}{t}}$ rows in columns $1,\ldots,t$ contains $[n]$, and 
\item The labels in the top-$(\ceil{\frac{n}{t}} - 1
)$ rows in columns $1,\ldots,t$ are all unique. 
\end{enumerate}

For the induction base, column $t=1$ is filled by $1,\ldots,n$. Column $t=2$ is filled by $\lceil \frac{n}{2}\rceil +i$ for $1\leq i \leq \lfloor \frac{n}{2}\rfloor$,
and then by $i$ for 
$1\leq i \leq \lceil \frac{n}{2}\rceil$.
This construction clearly satisfies both claims.

Suppose we have filled columns $1,\ldots,t$ for $2\leq t<n$ such that claims (1) and (2) hold.
We show how to progress to the next column $t+1$.  There are two cases to consider.

\underline{Case \#1:} 
$\ceil{\frac{n}{t+1}} \leq \ceil{\frac{n}{t}}-1$. Then by the induction hypothesis the labels in the top-$\ceil {\frac{n}{t+1}}$ rows and first $t$ columns are unique.
In column $t+1$, fill the top $\ceil {\frac{n}{t+1}}-1$ cells by labels not in this collection;
This is possible since
$(t+1)(\ceil{\frac{n}{t+1}} -1)< n$. 
The labels in the top-
$(\lceil \frac{n}{t+1}\rceil -1)$ rows
in the first $t+1$ columns are still unique, so claim (2) is satisfied.

Since $(t+1)\lceil \frac{n}{t+1} \rceil \geq n$, 
at most one label does not appear in a top-$\ceil {\frac{n}{t+1}}$ row yet; put that label in cell $\ceil {\frac{n}{t+1}}$. Now,
all $n$ labels appear in a top-$\ceil{\frac{n}{t+1} }$ row in the first $t+1$ columns, so claim (1) is satisfied too.

\underline{Case \#2:} $\lceil \frac{n}{t+1} \rceil = \lceil \frac{n}{t} \rceil$.
Then by the induction assumption all labels already appear in a top-$\lceil \frac{n}{t+1} \rceil$ row in columns $1,\ldots,t$, so claim (1) is already satisfied.
We only have to ensure uniqueness.
Take any labels that did not appear in a top $(\lceil \frac{n}{t+1} \rceil - 1)$ row yet, and put them in the top cells of column $t+1$.  
Since $(t+1)(\lceil \frac{n}{t+1} \rceil -1)<n$, 
all labels in one of the top 
$\lceil \frac{n}{t+1} \rceil -1$ rows in the first $t+1$ 
columns are unique, and claim (2) is satisfied too.

\paragraph{\textbf{Step 2: Completing the columns to a full latin rectangle.}}
For each $t \in \{1,\ldots, \ceil{\frac{n}{2}}-1 \}$, we will show how to complete column $t+1$, such that the rectangle consisting of days $1,\ldots,t+1$ is a latin rectangle. 
Step 1 already handles $t=1$, so we can assume $t\geq 2$.
~
In \Cref{sec:full-balance}, we will present a fairness notion stronger than both latin square and top-balance; in \Cref{thm:fully-balanced}, we will show that it can be satisfied for all $n\leq 11$.
Hence, we can assume here that $n\geq 12$.

To complete column $t+1$, we apply  \Cref{lem:row-complete} with $\ell$ being the number of cells filled in Step $1$.  Note, we have that $\ceil{\frac{n}{t+1}}-1 \leq \ell \leq \ceil{\frac{n}{t+1}}$.
~
If $t= 2$, 
$$
n-\ell-t \geq 
n-\ceil{\frac{n}{t+1}} -t > 
n-\Big(\frac{n}{3}+1\Big)-2=\frac{2n}{3}-3\geq 5>t,$$
which is sufficient by \Cref{rem:row-complete}.
Similarly, if $t= 3$, then 
$$
n-\ell-t \geq 
n-\ceil{\frac{n}{t+1}} -t > 
n-\Big(\frac{n}{4}+1\Big)-3=\frac{3n}{4}-4\geq 5>t.$$
Now, for $n\geq 12$, $t\geq 4$ and $t\leq \big\lceil \frac{n}{t+1}\big\rceil$, 
$$
n-\ell-t \geq 
n-2\ceil {\frac{n}{t+1}} > \frac{3n}{5} - 2 
> \frac{n}{5}+1 \geq \ceil{\frac{n}{t+1}} \geq t.$$
Again, by \Cref{rem:row-complete}, \Cref{lem:row-complete} is satisfied and we are able to complete column $t+1$.  
~
Finally, we handle  $t>\ceil{\frac{n}{t+1}}$. We consider two cases.

\underline{Case \#1:} $n$ is even (and $n\geq 12$).
Recall $\ceil{\frac{n}{t+1}}-1\leq \ell \leq \ceil{\frac{n}{t+1}}$.
The construction of Step 1 ensures that all $n$ labels appear at least once in the top $\ceil{\frac{n}{t+1}}$ rows of the first $t+1$ columns.
Hence, any set $I$ of labels that do not already appear in the top $\ell$ rows of column $t+1$, must appear in the top $\ceil{\frac{n}{t+1}}$ rows of the first $t$ columns at least once.  If $|I|=t$, since $t>\ceil{\frac{n}{t+1}}$, there exists an element $a\in I$ such that $a$ appears in the top $\ell$ rows and the first $t$ columns.  Thus, the element $a$ can only appear at most $t-1$ times in the bottom $n-\ell$ rows.  Hence, to apply \Cref{lem:row-complete}, it is sufficient to show a slightly weaker condition than that of \Cref{rem:row-complete}: $n-\ell-|I| \geq t-1.$  
As $|I|\leq t$, it is sufficient to show that 
$n-\ell-t \geq t-1.$ 

On the other hand, if $|I|\leq t-1$, then it is sufficient to show $n-\ell -(t-1) \geq t$ which also implies it is sufficient to show $n-\ell-t \geq t-1.$  

Substituting an upper bound for $\ell$ and rearranging terms leads to the following sufficient condition:
$$n-\Big( \frac{n}{t+1}+1\Big)-t - (t-1) = n-\frac{n}{t+1}-2 t \geq 0.$$
We know for $n\geq 12$ and $t=1$, $n-\frac{n}{t+1}-2 t>0$. 
Solving $n-\frac{n}{t+1}-2t = 0$ gives
$t=\frac{n}{2}-1$ or $t=0$ (recall that $n$ is even). 
By Bolzano's Theorem \cite{bolzano1817rein}, this implies the \Cref{lem:row-complete} condition holds for $0\leq t\leq \frac{n}{2}-1$.  
Hence, we can complete all columns up to column $\frac{n}{2}$.

\underline{Case \#2:} $n$ is odd (and $n\geq 13$).
By the argument of Case \#1, we can complete 
column $t+1$ for all $t\leq \frac{n}{2}-1$,
which for odd $n$ means $t\leq \frac{n-3}{2}$.
Thus, the first $\frac{n-1}{2}$ columns can be completed.

It remains to complete column $\frac{n+1}{2} = \ceil{\frac{n}{2}}$.
As $n-1$ labels have already appeared in the top two rows and the first $\frac{n-1}{2}$ columns, 
the construction of Step 1 has to put only the single remaining label at the top row of column $\frac{n+1}{2}$, so we can take $\ell=1$. As $t=\frac{n-1}{2}$, we get
$$
n - \ell - t = n - 1 - \frac{n-1}{2} = \frac{n-1}{2} = t.
$$
The condition in \Cref{rem:row-complete} is satisfied, so column $\frac{n+1}{2}$ can be completed. 

\paragraph{\textbf{Step 3: Completing the rectangle to a full latin square.}}
At this point, we have a top-balanced latin rectangle with $\ceil{\frac{n}{2}}$ columns.
Apply Hall's theorem \citep{hall1945existence} to complete 
the final $\floor{\frac{n}{2}}$ columns and obtain a latin square. 

Since $\floor{\frac{n}{2}}$ labels have not appeared at row 1 in the first $\ceil{\frac{n}{2}}$ columns, 
these labels must appear in the first row 
in the last $\floor{\frac{n}{2}}$ columns to obtain a latin square. This implies that the resulting latin square remains top-balanced. 
\qed
\end{proof}

\section{Stronger Fairness After Every Day: Full-Balance}
\label{sec:full-balance}
The top-balance condition considers only the top items for each player. The allocations of other items can be arbitrary, and thus unfair.
The condition can be strengthened as follows:
\begin{itemize}
	\item In the first two days, each player should receive at least one item from the top $\ceil{n/2}$ items --- this condition is the same as in top-balance.
	\item In the first three days, each player should receive at least one item from the top $\ceil{n/3}$ items, \emph{and at least two items from the top $\ceil{2 n/3}$ items} --- the latter condition is new.
	\item In general, in the first $t$ days, for every $j\in\{1,\ldots,t\}$, each player should receive at least $j$ items from the highest-valued $\ceil{j n/t}$ items.
\end{itemize}
All these conditions are formalized succinctly below.
\begin{definition}
A permutation sequence is called \emph{fully-balanced} if after every day $t\in[n]$, the bundle $Z_i^t$ of every player $i$ satisfies the following  for every $j\in[t]$:
\begin{align}
	\label{eq:bal}
	Z_i^t[j] \leq \ceil{j n / t}.
\end{align}
\end{definition}

We denote by $\propcond(t,j)$ the condition \eqref{eq:bal} for some particular $t\in[n]$ and $j\in[t]$. 
So a repeated assignment is fully-balanced if it satisfies $\propcond(t,j)$ for every day $t\in[n]$ and item $j\in[t]$.

\begin{remark}
    (a) $\propcond(t=n,j)$ implies that, after $n$ days, the $j$-best highest item of each player is item $j$; this implies that each player has exactly one copy of each item, which is the Latin-square condition of \Cref{sec:latin-square}.
    
    (b) $\propcond(t,j=1)$ is exactly the top-balance condition of \Cref{sec:top-balance}.

    (c) $\propcond(t=1,\cdot)$ and $\propcond(t,j=t)$ hold trivially, so in fact we only have to check $t\in\{2,\ldots,n\}$ and $j\in\{1,\ldots,t-1\}$.
\end{remark}

Do fully-balanced sequences exist for all $n$?

It is easy to find fully-balanced sequences for $n \in \{3,4,5,6\}$; the readers may want to try it themselves before looking at \Cref{tab:n=3456}. To verify that a permutation sequence is fully-balanced, one should check, for each $t\in[n]$ and $j\in [t]$, that the top-right rectangle with $t$ columns and $\ceil{jn/t}$ rows contains each integer in $1,\ldots,n$ at least $j$ times.
For example, for $n=6$ and $t=4$, we check that the $2\times 4$ top-left rectangle contains $1,\ldots,6$ at least once; 
 the $3\times 4$ top-left rectangle contains $1,\ldots,6$ at least twice; and the $5\times 4$ top-left rectangle contains $1,\ldots,6$ at least three times.
\begin{table}
\begin{center}
	\begin{tabular}{cccc}
		\begin{minipage}{.17\linewidth}
\begin{tabular}{|c|c|c|}
	\hline
	1 & 2 & 3 \\
	2 & 3 & 1 \\
	3 & 1 & 2 \\
	\hline
\end{tabular}
		\end{minipage}	
		&  
		\begin{minipage}{.22\linewidth}
\begin{tabular}{|c|c|c|c|}
	\hline
	1 & 4 & 3 & 2 \\
	2 & 3 & 4 & 1 \\
	3 & 2 & 1 & 4 \\
	4 & 1 & 2 & 3 \\
	\hline
\end{tabular}
		\end{minipage}	
		&  
\begin{minipage}{.27\linewidth}
	\begin{tabular}{|c|c|c|c|c|}
		\hline
\ifRowIsPlayer
		1 & 3 & 5 & 4 & 2\\
		2 & 4 & 1 & 5 & 3\\
		3 & 5 & 2 & 1 & 4\\
		4 & 1 & 3 & 2 & 5\\
		5 & 2 & 4 & 3 & 1\\
\else 
		1 & 4 & 2 & 3 & 5\\
		2 & 5 & 3 & 4 & 1\\
		3 & 1 & 4 & 5 & 2\\
		4 & 2 & 5 & 1 & 3\\
		5 & 3 & 1 & 2 & 4\\
\fi
		\hline
	\end{tabular}
\end{minipage}	
		&  
\begin{minipage}{.33\linewidth}
	\begin{tabular}{|c|c|c|c|c|c|}
		\hline
\ifRowIsPlayer
		1 & 4 & 5 & 3 & 2 & 6\\
		2 & 5 & 3 & 6 & 1 & 4\\
		3 & 6 & 1 & 5 & 4 & 2\\
		4 & 1 & 6 & 2 & 3 & 5\\
		5 & 2 & 4 & 1 & 6 & 3\\
		6 & 3 & 2 & 4 & 5 & 1\\
\else 
		1 & 4 & 3 & 5 & 2 & 6\\
		2 & 5 & 6 & 4 & 1 & 3\\
		3 & 6 & 2 & 1 & 4 & 5\\
		4 & 1 & 5 & 6 & 3 & 2\\
		5 & 2 & 1 & 3 & 6 & 4\\
		6 & 3 & 4 & 2 & 5 & 1\\
\fi
		\hline
	\end{tabular}
\end{minipage}	
	\end{tabular}
\end{center}
\caption{
	Fully-balanced permutation sequences for $n\in\{3,4,5,6\}$.
\label{tab:n=3456}
}
\end{table}
We were able to handcraft sequences for $n\leq 8$.  For larger values of $n$, we implemented a backtracking algorithm, and found such sequences for all $n\leq 11$.
\Cref{app:fully-balanced-sequences} shows the fully-balanced sequences that we found for $7\leq n \leq 11$. 
\begin{theorem}
	\label{thm:fully-balanced}
    For all $n\leq 11$, a fully-balanced permutation sequence exists.
\end{theorem}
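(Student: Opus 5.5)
The statement is existential and finite, so I would prove it by exhibiting one explicit fully-balanced permutation sequence for each $n\leq 11$ and verifying each of them mechanically. For $n\in\{1,2\}$ the condition is trivial: for $n=2$, $\propcond(2,1)$ only asks that each player holds item $1$ or $2$, and any $2\times 2$ latin square works. For $n\in\{3,4,5,6\}$ I would take the squares of \Cref{tab:n=3456}. For $7\leq n\leq 11$ I would take the sequences listed in \Cref{app:fully-balanced-sequences}, where the cases $n=7,8$ can be handcrafted and $9\leq n\leq 11$ come from the backtracking search.

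The verification reduces to the rectangle criterion stated before the table. For each $t\in\{2,\ldots,n\}$ and $j\in\{1,\ldots,t-1\}$, the top-left rectangle with $t$ columns and $\ceil{jn/t}$ rows must contain every label $1,\ldots,n$ at least $j$ times. By part (c) of the remark following the definition, no other pairs $(t,j)$ need checking. By part (a) of that remark, the case $t=n$ already enforces the latin-square property. In practice I would still check directly that each column and each row is a permutation, as a sanity check. For small $n$ (say $n\le 6$) the checks can be written out by hand as in the worked example for $n=6,t=4$. For larger $n$ they are a finite computation of at most $\sum_t (t-1)\cdot n$ label counts, which a short program performs.

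To construct the large cases I would run a backtracking search that fills the square column by column, i.e.\ day by day. At each day $t$, it checks the constraints $\propcond(t,j)$ for all $j<t$ as soon as column $t$ is complete. It also prunes early using necessary counting conditions: a label that still lacks enough occurrences in the top $\ceil{jn/t'}$ rows cannot be rescued in the remaining days. Symmetry breaking makes the search fast enough for $n\le 11$. Relabeling players lets us fix the first column to $1,\ldots,n$, and we can also order the labels within the top block of the second column.

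The main obstacle is not the proof itself, which is a finite check, but finding the sequences for $n=9,10,11$. The search space grows super-exponentially, and the constraints for intermediate $t$ interact, which is where the proof would fail if the pruning were too weak. If plain backtracking stalls, I would first build the top rows greedily, in the style of Step 1 of the proof of \Cref{lem:top-slots}, and then complete the remaining cells with the column-completion \Cref{lem:row-complete} combined with backtracking on violations. Once the sequences are found, the proof consists of displaying them and citing the exhaustive check.
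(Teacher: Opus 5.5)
Your proposal is correct and is exactly the paper's argument: the theorem is established by exhibiting explicit sequences (Tables~\ref{tab:n=3456}, \ref{tab:n=789} and~\ref{tab:n=1011}, handcrafted for $n\le 8$ and found by backtracking for $9\le n\le 11$) and checking the rectangle criterion, with $n\le 2$ trivial. Two harmless slips. First, for $n=2$, $\propcond(2,1)$ requires each player to hold item $1$ itself (since $\ceil{2/2}=1$), not ``item $1$ or $2$'', although any $2\times 2$ latin square still works. Second, reordering the entries in the top block of the second column is not a genuine symmetry, because the $t=n$ conditions make every row boundary a threshold; it should not be used for pruning, but this affects only your search heuristic, not the proof.
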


Twelve (12) is the first natural number that does not admit 
a fully-balanced sequence.  Below we show there 
are infinitely many natural numbers with no fully-balanced sequence, including $n=12$. 
\begin{proposition}
	\label[proposition]{lem:n=6k}
There is no fully-balanced sequence of length $n=6k$ for $k\geq 2$. 
\end{proposition}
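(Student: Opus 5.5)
The plan is a \emph{tight counting} argument on the top-left rectangles at days $t\in\{2,3,4,6\}$. For $n=6k$ these are exactly the days for which $jn/t$ is an integer for many $j$, so the balance inequalities become equalities.

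\textbf{Step 1: tightness.} Suppose $t\mid jn$ and the top-left rectangle has $t$ columns and $jn/t$ rows. It has exactly $jn$ cells. By $\propcond(t,j)$ every player appears in it at least $j$ times, so every player appears in it \emph{exactly} $j$ times. For $n=6k$ this gives the following exact statements.
\begin{itemize}
\item $t=2$: in days $1,2$, each player gets exactly one item of rank $\le 3k$.
\item $t=3$: in days $1..3$, each player gets exactly one item of rank $\le 2k$ and exactly two of rank $\le 4k$.
\item $t=6$: in days $1..6$, each player gets exactly $j$ items of rank $\le jk$, for $j=1,\dots,5$.
\item $t=4$: in days $1..4$, each player gets exactly two items of rank $\le 3k$. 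If $k$ is even, each player also gets exactly one of rank $\le 3k/2$ and exactly three of rank $\le 9k/2$.
\end{itemize}
Subtracting consecutive days, each player gets exactly one item of rank $\le 3k$ in days $3,4$, and likewise in days $5,6$.

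\textbf{Step 2: classify players after three days.} I will classify players by the ranks they receive on days $1,2,3$, in the rank bands $(0,k],(k,2k],\dots,(5k,6k]$. The day-3 analysis already forces a rigid structure, which I will then propagate.
\begin{itemize}
\item Players whose best day-$1/2$ item lies in $(2k,3k]$ number exactly $2k$. They must receive the entire top $2k$ of day $3$.
\item The remaining $4k$ players have their best day-$1/2$ item in $[1,2k]$ and their other day-$1/2$ item in $(3k,6k]$.
\item The $2k$ day-$1/2$ entries in $(3k,4k]$, together with the $2k$ day-$3$ entries in $(2k,4k]$, must be distributed one per player among these $4k$ players.
\end{itemize}

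\textbf{Step 3: propagate to days $4,5,6$.} I will carry this classification forward using the $t=4$ and $t=6$ equalities, together with the Latin-column constraint (each day is a permutation). The aim is to count, band by band, how many entries of each band every class of players must receive in days $4$--$6$. The point is that the $t=6$ equalities fix each player's multiset of bands over six days to be exactly one entry per band $(0,k],\dots,(5k,6k]$. The earlier constraints at $t=2,3,4$ then force prescribed band-patterns in days $1$--$3$ and $1$--$4$. Counting how many players need a given band on day $4$, versus the $k$ slots of that band available in a single column, should yield a demand exceeding $k$. That is the desired contradiction.

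The hypothesis $k\ge 2$ must enter here, because $n=6$ does admit a fully-balanced sequence (\Cref{tab:n=3456}). I expect it to enter through the extra non-tight constraints $\propcond(4,1)$ and $\propcond(5,j)$, where $\lceil jn/t\rceil$ exceeds $jn/t$ by a fraction. For $k\ge2$ this slack is too small to absorb the forced imbalance, while for $k=1$ it is large relative to the band size. If $k$ is odd, I will rely on $t=5$ or on the rounding $\lceil 3k/2\rceil$ instead of the exact $t=4,j=1$ condition.

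\textbf{Main obstacle.} The hard step is Step~3: choosing the right day and band where the counts overflow, and doing the case analysis by the parity of $k$. I would first try to derive the contradiction at day $4$ with rank threshold $\lceil 3k/2\rceil$, and if that fails, move to days $5,6$ with the band $(k,2k]$.
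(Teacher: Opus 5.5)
\textbf{There is a genuine gap: Step 3 is a plan, not an argument.} Your Steps 1 and 2 are correct. But Step 3 never exhibits a day and a set of positions where demand exceeds supply. The mechanism you anticipate is also not where the contradiction lies: an overflow of some band of $k$ slots, possibly only on days 5--6, with $k\ge 2$ entering through the ``slack'' of non-tight constraints such as $\propcond(5,j)$. For even $k$, $\propcond(4,1)$ is in fact tight, and no parity split or day beyond 4 is needed. As written, the proposal does not prove the proposition.

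\textbf{The missing idea.} Look at the players in day-3 positions $\lceil 3k/2\rceil+1,\dots,2k$. This set is nonempty exactly when $\lceil 3k/2\rceil<2k$, i.e.\ $k\ge 2$; this is the true role of the hypothesis. For $k=1$ the set is empty, consistent with $n=6$ admitting a fully-balanced sequence.

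Take such a player, e.g.\ the one holding item $2k$ on day 3. By your Step 2, this player got one item in $(2k,3k]$ and one in $(3k,6k]$ on days 1--2, and item $2k$ on day 3. So after three days they hold nothing of rank $\le\lceil 3k/2\rceil$. Hence $\propcond(4,1)$ forces them to receive an item of rank $\le\lceil 3k/2\rceil\le 3k$ on day 4.

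But you already derived in Step 1 that every player gets \emph{exactly one} item of rank $\le 3k$ in days 3--4. This player's day-3 item $2k\le 3k$ has used it up, which is a contradiction. Equivalently, in the paper's phrasing: by $\propcond(4,2)$, the $3k$ players in the bottom half of day 3 must occupy the entire top half of day 4, leaving no slot for this player. The demand on the top $3k$ slots of day 4 is then $3k+(2k-\lceil 3k/2\rceil)>3k$.

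So everything you need is already contained in your Steps 1--2. The propagation to days 5--6 and the band-by-band bookkeeping are unnecessary.
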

\begin{proof}
Let $A_i$ represent the player who goes $i^{th}$ in the first day. Let $B_i$ represent the player who goes $i^{th}$ in the second day; $C_i$ the player who goes $i^{th}$ in the third day and $D_i$ the player who goes $i^{th}$ in the fourth day. We will show that balance must break down within the fourth day.  Notice that for $k\geq 2$, $\ceil{\frac{6k}{4}} < 2k$.  
Here is the list of implications that prevent balance:
\begin{enumerate}
\item \label{ps-1}
$\rangeA{3k+1}{6k} = \rangeB{1}{3k}$
by $\propcond(2,1)$. Each player that appears in the second half in the first day must appear in the first half in the second day, since $\ceil{\frac{6k}{2}}= 3k$. 

\item \label{ps-2}
$\rangeA{1}{3k}=\rangeB{3k+1}{6k}$.
This follows by \eqref{ps-1}.

\item \label{ps-3}
$\rangeC{1}{2k}=\rangeA{2k+1}{3k} \cup \rangeB{2k+1}{3k}$ by $\propcond(3,1)$. Since $\ceil{\frac{6k}{3}}=2k$, each player must go at least once in the top $2k$ positions after the first $3$ days.  But, $\rangeA{2k+1}{3k} \cup \rangeB{2k+1}{3k}$ are $2k$ distinct players that have not gone in the top $2k$ positions after the first $2$ days. 

\item \label{ps-4}
$\{ C_{2k}\} \subset \rangeD{1}{3k}$ by $\propcond(4,1)$.
Let $z :=\ceil{\frac{6k}{4}}$. Since $z < 2k$, then player $C_{2k}$ did not get any of the top $z$ items in any of the first three days.  Thus, $C_{2k}\in \rangeD{1}{z}$ 
(Moreover, $\rangeC{z+1}{2k} \subseteq \rangeD{1}{z}$).

\item \label{ps-5} 
$\rangeC{3k+1}{6k} \subseteq \rangeD{1}{3k}$ by $\propcond(4,2)$. 
Each player is in the first $3k$ positions in the first or second day, but not both days.  This follows from (\ref{ps-1}) and (\ref{ps-2}).  Since $\frac{2(6k)}{4}=3k$, then each player goes in the top $3k$ positions at least twice in the first $4$ days.  But, each of $C_{3k+1},\ldots ,C_{6k}$ have gone once in the first $3$ days.  
\end{enumerate}
But there is no room for all of $C_{2k},C_{3k+1},\ldots ,C_{6k}$ to go in the top $3k$ positions in the fourth day. That is, 
$$|\{C_{2k},C_{3k+1},C_{3k+2},\ldots ,C_{6k}\}|=|\{D_1,D_2,\ldots ,D_{3k}\}|+1$$
which leads to a contradiction. 
 \end{proof}

\Cref{lem:n=6k} can be extended to all $n$. The proof varies with the remainder of $n$ divided by $6$, so there are six different proofs (including \Cref{lem:n=6k}).
The table below shows the lower bounds for each remainder $j$ in $\{0,1,2,3,4,5\}$.  The proofs are given in \Cref{app:no-balanced-sequence}.
\begin{table}[!ht]
    \centering 
\begin{tabular}{|c||c|c|c|c|c|c|}
\hline 
$n$ & $6k$ & $6k+1$ & $6k+2$ & $6k+3$ & $6k+4$ & $6k+5$ \\ 
\hline
\mbox{bounds on $k$} & $k\geq 2$ & $k\geq 11$ & $k\geq 3$ & $k\geq 8$ & $k\geq 4$ & $k\geq 9$ \\ 
\hline
\mbox{bounds on $n$} & $n\geq 12$ & $n\geq 67$ & $n\geq 20$ & $n\geq 51$ & $n\geq 28$ & $n\geq 59$ \\ 
\hline 
\end{tabular}
~\\
~\\
\caption{Values of $n=6k+j$ with no fully-balanced sequence}\label{tab:no-full-balance}
\end{table}
\Cref{tab:no-full-balance} shows there are at most finitely many values $n$ 
which admit a fully-balanced sequence.  In particular, the following 
theorem follows from \Cref{tab:no-full-balance}. 
\begin{theorem}
\label[theorem]{thm:finitely-many-balanced}
There exist finitely many $n$ such that there is 
a fully-balanced sequence of length $n$.  Moreover, 
there is no fully-balanced sequence for $n> 61$. 
\end{theorem}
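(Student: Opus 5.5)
The statement follows directly from \Cref{tab:no-full-balance}, taking the six residue-class non-existence results as given: \Cref{lem:n=6k} for $n=6k$, and the analogous statements in \Cref{app:no-balanced-sequence} for $n=6k+j$ with $j\in\{1,\dots,5\}$. The plan is to show that every $n>61$ is covered by at least one row of the table. Finiteness is then immediate, since the set of admissible $n$ lies inside $\{1,\dots,61\}$.

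Write $n=6k+j$ with $j\in\{0,\dots,5\}$ and go through the residues one at a time. Each table entry gives a threshold $n_j$ such that no fully-balanced sequence exists for $n\ge n_j$ with $n\equiv j \pmod 6$. The thresholds are $n_0=12$, $n_1=67$, $n_2=20$, $n_3=51$, $n_4=28$, and $n_5=59$.

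For $j\in\{0,2,3,4,5\}$ the threshold is at most $59<62$, so every $n>61$ in these classes is excluded. The only class needing care is $j=1$, where the threshold is $67$. The values $n\equiv1\pmod 6$ with $61<n<67$ form the interval $\{62,\dots,66\}$, and no integer there is $\equiv 1\pmod 6$: $61\equiv1$ and $67\equiv1$ are consecutive members of the class. Hence $n\ge 62$ and $n\equiv 1$ together force $n\ge 67$, and the table applies. Combining the six cases, no fully-balanced sequence exists for any $n>61$.

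The main obstacle lies entirely in the cited appendix results, the five residue-specific impossibility arguments. These presumably mimic \Cref{lem:n=6k}: track which players occupy the top $\ceil{n/2}$, $\ceil{n/3}$, and $\ceil{n/4}$ positions over the first few days using $\propcond(2,1)$, $\propcond(3,1)$, $\propcond(4,1)$, and $\propcond(4,2)$ (possibly extended to later days), then derive a pigeonhole contradiction once $k$ is large enough that the ceilings separate. Given those results as stated, the theorem itself needs only the arithmetic check above, with the gap $62$–$66$ in the class $j=1$ as the one point to verify.
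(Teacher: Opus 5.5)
Your proposal is correct and matches the paper's proof, which likewise reads off the six residue-class thresholds of \Cref{tab:no-full-balance} (proved in \Cref{app:no-balanced-sequence}) and observes that $61$ is the largest integer they do not cover. Your explicit check that no $n\equiv 1 \pmod 6$ lies strictly between $61$ and $67$ is exactly the arithmetic step the paper calls ``straightforward.''
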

\begin{proof}
Since every natural number can be expressed uniquely 
in the form $n=6k+j$ for $j=0,1,2,3,4,5$, then we 
can list the natural numbers that fall under the bounds 
in \Cref{tab:no-full-balance}.  It is straightforward to check that 
$n=61$ is the largest natural number which possibly 
has a fully-balanced sequence. 
See the \Cref{app:no-balanced-sequence} for a proof of \Cref{tab:no-full-balance} and this theorem. 
\end{proof}

\section{Valuations and Proportionality: Weak Full-Balance}
As the concept of a fully-balanced sequence is too strong, we would like to relax it. 
To do so (and also to link our balance concepts to the literature on fair allocation), we assume that each item $j\in[n]$ has a \emph{value} $v_j\in\mathbb{R}$, such that $v_1 > v_2 > \cdots > v_n$.%
\footnote{
\label{ftn:different-valuations}
The literature on fair division studies a more general setting, in which different players may have different rankings and different valuations over the items. In \Cref{app:het-valuations} we prove that our main theorem holds in this more general setting too.
}
The total value of any bundle (multiset of items) $Z$ is denoted $v(Z) := \sum_{k\in Z} v_k$. 

A natural fairness concept is that each player should receive a bundle of items that is worth at least $1/n$ of the total value, that is, $v(Z_i^t)\geq v(Z_N^t)/n$.%
\footnote{
The phrase ``at least'' comes from the more general setting (\cref{ftn:different-valuations}). In the case of a common valuation, proportionality implies that each player receives exactly $1/n$ of the total value.
}
This concept is prominent both in law and in the literature on fair division ever since its inception \citep{steinhaus1948problem}. Nowadays, it is called  \emph{proportionality} \citep{brams1996fair,robertson1998cake}. 
We require a strong variant of proportionality, which holds not only for specific valuations but for \emph{any} valuations that satisfy the condition $v_1 > \cdots > v_n$. We call this variant \emph{ordinal proportionality}, as it is based only on the order among items rather than on their exact values.%
\footnote{Ordinal fairness 
	is also known as \emph{necessary fairness} or \emph{stochastic-dominance fairness (sd-fairness)}.
	Ordinal fairness in item allocation has been studied e.g. by  \citet{bouveret2010fair,pruhs2012divorcing,aziz2015fair,segal2020fair,brams2022two,brams2026fair}.
}

Given the latin square constraint (\Cref{sec:latin-square}), the allocation after day $n$ is ordinally proportional. 
This is because the bundle of every player $i$ satisfies $v(Z_i^n) = \sum_{j\in M} v_j$ and the total value of all items is $v(Z_N^n) = n\cdot \sum_{j\in M} v_j$.
However, before day $n$ it is impossible to guarantee ordinal proportionality. For example, if $v_1$ is very high and the other item values are very low, then at any day before the $n$-th day, the value of some players (those who did not get item 1 so far) will be below the proportionality threshold.

A natural relaxation of proportionality for indivisible items is \emph{proportionality up to $c$ items}, abbreviated PROP$c$ \citep{conitzer2017fair,chakraborty2021weighted,gourves2021fairness,bu2023fair,kahana2026perpetual}. It means that, for each player $i$, if we add to $i$'s bundle some $c$ items not allocated to $i$ yet, then $i$'s value will be at least $1/n$ of the total value. This definition implicitly assumes that all items are goods (have positive values); we use a more symmetric definition, that applies equally to items with positive or negative values: instead of just adding $c$ items, we both add and remove the same number of items. 
\begin{definition}
	\label{def:propc}
Let $c\geq 1$ be an integer.

(a) A repeated assignment is called \emph{perpetually PROP$c$ for valuation $v$} if for every time $t\in[n]$ and player $i$, there is a set $C'\subseteq N\setminus Z^t_i$ and a set $C''\subseteq Z^t_i$, with $|C''|=|C'|\leq c$, such that $v(Z^t_i\setminus C'' \cup C')\geq v(Z^t_N)/n$.

(b) A repeated assignment is called \emph{perpetually ordinally PROP$c$} if it is perpetually PROP$c$ for every valuation $v$ satisfying $v_1 > \cdots > v_n$.
\end{definition}
Note that, when all values are positive, \Cref{def:propc}(a) implies the standard definition of PROP$c$, as removing items only makes the value smaller.

The sequence for $n=6$ at \Cref{tab:repeated-assignment-6-6} fails ordinal PROP1 already after the second day. This is because the total value allocated up to this time is $2\cdot (v_1+v_2+v_3+v_4+v_5+v_6)$, whereas the bundle of player 5 is $\{5,6\}$. Even if we add the most valuable item $1$, the resulting bundle value is $v_1+v_5+v_6$, which might be smaller than $1/6$ of the total value (e.g. if $v_5=v_6=0$ and $v_1=\cdots=v_4=1$).
Hence, the repeated assignment at \Cref{tab:repeated-assignment-6-6} is not perpetually ordinally PROP1.
~
Below we relate full balance to perpetual proportionality.
\begin{theorem}
\label[theorem]{thm:full-balance-implies-ordinal-prop1}
A fully-balanced permutation sequence generates a perpetually ordinally PROP1 repeated assignment.
\end{theorem}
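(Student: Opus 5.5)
The plan is to fix a day $t\in[n]$, a player $i$ and a valuation $v_1>\cdots>v_n$. Write the bundle $Z_i^t$ in sorted order as item indices $z_1\le z_2\le\cdots\le z_t$, so that $z_j=Z_i^t[j]$. By full balance, $z_j\le\ceil{jn/t}$ for every $j\in[t]$, and hence $v_{z_j}\ge v_{\ceil{jn/t}}$. The proportional share is $v(Z_N^t)/n=\frac{t}{n}\sum_{k=1}^n v_k$.

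The swap will be $C''=\{z_t\}$ (the worst item of $i$) and $C'=\{1\}$, meaning one copy of item $1$ allocated on a day when $i$ did not receive it. Such a copy exists unless $i$ received item $1$ on all $t$ days. In that case $v(Z_i^t)=t v_1\ge \frac{t}{n}\sum_k v_k$ already holds, with no swap needed. It therefore suffices to prove the purely ordinal inequality
\begin{align*}
v_1+\sum_{j=1}^{t-1} v_{\ceil{jn/t}} \;\ge\; \frac{t}{n}\sum_{k=1}^n v_k,
\end{align*}
because the left-hand side is at most $v_1+\sum_{j=1}^{t-1}v_{z_j}=v(Z_i^t\setminus C''\cup C')$.

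To prove this inequality I would use a fractional partition. Let $f(x)=v_{\ceil{x}}$ on $(0,n]$, so that $\sum_k v_k=\int_0^n f$, and note that $f$ is non-increasing. Split $(0,n]$ into the $t$ intervals $I_j=((j-1)n/t,\,jn/t]$, each of length $n/t$. On $I_j$, $f$ is bounded by its value just to the right of the left endpoint, namely $v_{\floor{(j-1)n/t}+1}$. For $j=1$ this is $v_1$. For $j\ge2$, $\floor{(j-1)n/t}+1\ge\ceil{(j-1)n/t}$, so the bound is at most $v_{\ceil{(j-1)n/t}}$. Summing $\int_{I_j}f\le\frac nt\cdot(\text{bound})$ over $j$ gives $\sum_k v_k\le\frac nt\bigl(v_1+\sum_{j=1}^{t-1}v_{\ceil{jn/t}}\bigr)$, which is exactly the claim.

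This argument uses only monotonicity, so it holds for negative values (chores) as well. The step needing the most care is this block-comparison bound: the index shift from $j$ to $j-1$ is what makes room for the extra $v_1$, and therefore for exactly one swapped item. The multiset bookkeeping of which copy of item $1$ is added is a minor point, handled as above.
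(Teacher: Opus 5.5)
Your proof is correct and is essentially the paper's argument. The paper swaps $i$'s worst item for a copy of item $1$ and then builds a bijection from $n$ clones of the modified bundle onto $Z_N^t$ with $x\le f(x)$; this is exactly the discrete version, scaled by $t$, of your block-by-block comparison over intervals of length $n/t$. Your bound on each block only needs $z_j\le\lfloor jn/t\rfloor+1$, which is weak full balance, and that is how the paper obtains this theorem from \Cref{thm:weak-full-balance-implies-ordinal-prop1}; your explicit handling of the case where $i$ holds all $t$ copies of item $1$ is a detail the paper leaves implicit.
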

\Cref{thm:full-balance-implies-ordinal-prop1} follows directly from \Cref{thm:weak-full-balance-implies-ordinal-prop1} below, which shows that ordinal PROP1 follows from an a slightly weaker balance condition.
\begin{definition}
A permutation sequence is called \emph{weakly fully-balanced} if for all $t\in[n]$, the bundle $Z_i^t$ of every player $i$ satisfies the following  for all $j\in[t]$:
\begin{align}
	\label{eq:weakbal}
	Z_i^t[j] \leq \floor{j n / t+1}.
\end{align}
\end{definition}
We call this condition $\weakpropcond(t,j)$. 

Note that $\weakpropcond(t,j)$ is weaker than $\propcond(t,j)$ only when $jn/t$ is an integer; otherwise they are equivalent.

\begin{theorem}
\label[theorem]{thm:weak-full-balance-implies-ordinal-prop1}
A weakly fully-balanced permutation sequence generates a perpetually ordinally PROP1 repeated assignment.
\end{theorem}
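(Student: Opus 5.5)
The plan is to fix a day $t$ and a player $i$, write the bundle in sorted form $Z_i^t=\{z_1\leq z_2\leq\cdots\leq z_t\}$ (recall that smaller labels are better items), and exhibit a single swap that reaches the proportional share for every valuation with $v_1>\cdots>v_n$. The natural swap removes the worst item $z_t$ and inserts item $1$. The whole argument is an item-by-item domination: I will bound the proportional share from above by a sum of $t$ single item values, and match those $t$ items one-to-one with items of the modified bundle that are at least as good. The cases $t=1$ (where the bundle is a single item and one swap to item $1$ suffices) and $t=n$ (where the latin square gives exact proportionality, by $\weakpropcond(n,j)$) are trivial, so I assume $2\leq t\leq n-1$.

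\textbf{Step 1: upper bound on the share.} We have $v(Z_N^t)/n = \frac{1}{n}\sum_{k\in L} v_k$, where $L$ is the sorted list of length $tn$ in which each item $k\in[n]$ appears $t$ times. I split $L$ into $t$ consecutive chunks of length $n$. Since values decrease along $L$, the average value in chunk $j$ is at most the value of its first entry. That entry sits at position $(j-1)n+1$ of $L$, so it is item $\ceil{((j-1)n+1)/t}$. Hence
\[
v(Z_N^t)/n \;\leq\; \sum_{j=1}^{t} v_{\ceil{((j-1)n+1)/t}} .
\]

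\textbf{Step 2: domination, when $1\notin Z_i^t$.} Take $C'=\{1\}$ and $C''=\{z_t\}$. I pair item $1$ with chunk $1$, whose bound is item $\ceil{1/t}=1$. For $j=2,\ldots,t$, I pair $z_{j-1}$ with chunk $j$. By $\weakpropcond(t,j-1)$ we have $z_{j-1}\leq\floor{m/t}+1$ with $m=(j-1)n$, so it suffices to check $\floor{m/t}+1\leq\ceil{(m+1)/t}$. If $t\mid m$, both sides equal $m/t+1$. Otherwise the left side equals $\ceil{m/t}\leq\ceil{(m+1)/t}$. Each paired item is therefore at least as good as its chunk's representative, and summing gives $v(Z_i^t\setminus\{z_t\}\cup\{1\})\geq v(Z_N^t)/n$.

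\textbf{Step 3: the case $1\in Z_i^t$.} This is the step I expect to be the main obstacle, because item $1$ cannot be added. Let $b$ be the smallest item not in $Z_i^t$; it exists because $t<n$ and $Z_i^t$ has only $t$ items. I swap $z_t$ for $b$, obtaining the bundle $W$ with sorted entries $w_1\leq\cdots\leq w_t$. The goal is to show $w_1=1$ and $w_j\leq z_{j-1}$ for all $j\geq2$; Step 2's pairing then applies verbatim to $W$.

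For $j\leq b$: the items $1,\ldots,b-1$ all lie in $Z_i^t$. If $z_t\geq b$, these items survive the removal of $z_t$, and together with $b$ they give $w_j=j$ for $j\leq b$. Since $z_{j-1}$ is the $(j-1)$-th smallest entry and at most $j-2$ distinct labels smaller than $j-1$ can precede it, we get $z_{j-1}\geq j-1$. So the requirement reduces to $w_j=j\leq\ceil{((j-1)n+1)/t}$, and this holds because $n/t>1$.

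For $j>b$: inserting $b$ in front of $z_{j-1}$ gives $w_j\leq z_{j-1}$ directly.

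If instead $z_t<b$, then every item of $Z_i^t$ is at most $b-1$. Replacing $z_t$ by $b$ costs exactly $v_{z_t}-v_b$, and I would handle this subcase either by choosing $C'=C''=\emptyset$ and comparing $Z_i^t$ directly with the chunk bounds, or by a careful recount of $W$. This subcase, together with the bookkeeping of multiset repetitions, is where I expect the remaining care to lie.
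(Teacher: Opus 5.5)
Your Steps 1 and 2 are the paper's proof. The paper forms $Z_i^t\setminus\{z_t\}\cup\{1\}$, takes $n$ clones of each element, and maps the clones of its $j$-th smallest element onto the $j$-th block of $n$ consecutive entries of the sorted multiset $Z_N^t$. That is exactly your comparison with $\ceil{((j-1)n+1)/t}$, and your inequality $\floor{m/t}+1\le\ceil{(m+1)/t}$ makes the paper's verbal bookkeeping precise. This part is correct, and it already covers $t=1$ and $t=n$.

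\textbf{Step 3 and the reading of $C'$.} The paper has no Step 3. It adds a copy of item $1$ unconditionally, because $Z_N^t$ contains $t$ copies of item $1$ and $C'$ may be a copy allocated to another player. So the sorted modified bundle is $(1,z_1,\ldots,z_{t-1})$ whether or not $z_1=1$. The only exception is $i$ holding all $t$ copies of item $1$, which is trivial since then $v(Z_i^t)=tv_1\ge v(Z_N^t)/n$. Under that reading your proof is complete after Step 2.

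If you insist that $C'$ must be a label absent from $Z_i^t$, then Step 3 is needed and, as written, is unfinished.
\begin{itemize}
\item The open subcase $z_t<b$ is easy. There $Z_i^t$ uses only labels from $\{1,\ldots,b-1\}$ and contains each of them, so $z_j\le j\le\ceil{((j-1)n+1)/t}$ for all $j$, and $C'=C''=\emptyset$ works.
\item In the subcase $z_t>b$, your intermediate claims $z_{j-1}\ge j-1$, $w_j=j$, and $w_j\le z_{j-1}$ for $j>b$ are false when labels repeat, which weak balance allows. For a bundle such as $\{1,1,2,7\}$, with $b=3$, you get $W=\{1,1,2,3\}$, so $w_2=1\neq 2$, $z_2=1<2$, and $w_4=3>z_3=2$.
\item What you actually need is only $w_j\le\ceil{((j-1)n+1)/t}$. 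This follows from $w_j\le j$ for $j\le b$, since $\{1,\ldots,b\}\subseteq W$. For $j>b$ it follows from $w_j\le\max(b,z_{j-1})$, since $W$ contains $b,z_1,\ldots,z_{j-1}$; then use $b<j\le\ceil{((j-1)n+1)/t}$ and Step 2's bound on $z_{j-1}$.
\end{itemize}

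\textbf{The case $t=n$.} Your justification of this case is wrong. $\weakpropcond(n,j)$ only gives $Z_i^n[j]\le j+1$, so a weakly fully-balanced sequence need not be a latin square, and exact proportionality at day $n$ can fail. For example, with $n=3$, give items $(1,2,3)$, $(2,3,1)$, $(1,2,3)$ to $(A_1,A_2,A_3)$ on days $1,2,3$. This is weakly fully-balanced, yet $A_2$ ends with $\{2,3,2\}$. The slip is harmless, because Step 2 handles $t=n$ directly: there $\ceil{((j-1)n+1)/n}=j$ and $z_{j-1}\le j$.
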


\begin{proof}
We fix a day $t$, and prove that the cumulative allocation up to day $t$ satisfies ordinal PROP1.

So far, $n t$ items have been allocated, so $|Z_N^t| = n t$. Note that the multiset $Z_N^t$ contains exactly $t$ clones of every item in $1,\ldots,n$.

Fix a player $i$. Let us add to $i$'s bundle $Z_i^t$ a copy of the highest-valued item (1), and remove the least-valuable item from $i$'s bundle, so that $i$'s new bundle still has $t$ items overall. Denote this new bundle $C_i$.
To show PROP1, it is sufficient to show that the $v(C_i)\geq v(Z_N^t)/n$.
~
Construct a new multiset $D_i$, containing $n$ clones of each item in $C_i$; now, it is sufficient to show that $v(D_i)\geq v(Z_N^t)$. We do this by showing a bijection $f: D_i \to Z_N^t$, such that $x \leq f(x)$ for all $x\in D_i$; hence $v(x)\geq v(f(x))$ for all $x\in D_i$.

We first show the bijection for the special case that $n/t$ is an integer.
\begin{itemize}
\item We first map the $n$ clones of item 1 in $D_i$ (the added item).  We map them to the $t$ clones of items $1,2,\ldots,n/t$ in $Z_N^t$.

\item Next, we map the $n$ clones of the best item held by $i$, denoted $Z_i^t[1]$. By $\weakpropcond(t,j=1)$, $Z_i^t[1]\leq \floor{n/t+1} = n/t + 1$. Hence, we can map it to the $t$ clones of items $n/t+1,2,\ldots,2 n/t$ in $Z_N^t$.

\item Similarly, by $\weakpropcond(t,j)$ for all $j\in[t-1]$, 
$Z_i^t[j]\leq j n /t + 1$; hence, we can map the $n$ clones of $Z_i^t[j]$ to the $t$ clones of items $j n/t+1,2,\ldots,(j+1) n/t$ in $Z_N^t$.
\end{itemize}
When $n/t$ is not an integer, the mapping works as follows:
\begin{itemize}
\item The $n$ clones of item 1 in $D_i$ are mapped to the $t$ clones of items $1,2,\ldots,\floor{n/t}$ in $Z_N^t$, as well as to some $n \% t$ clones of item $\floor{n/t+1}$ (here $\%$ denotes the integer remainder operation).
	
\item The $n$ clones of $Z_i^t[1]$ in $D_i$, which is at most $\floor{n/t+1}$ by  $\weakpropcond(t,j=1)$, are mapped to the remaining $t-n \% t$ clones of item $\floor{n/t+1}$ in $Z_N^t$, as well as to the $t$ clones of $\floor{n/t+2}$ and some lower-valued items. 
Overall, the $2n$ clones of $1$ and $Z_i^t[1]$ in $D_i$ are mapped to the $t$ clones of items $1,\ldots,\floor{2n/t}$ as well as to some clones of $\floor{2n/t+1}$ in $Z_N^t$.

\item Similarly, for all $j\in[t-1]$, the $(j-1)n$ clones of items $1, Z_i^t[1],\ldots,Z_i^t[j-1]$ in $D_i$ are mapped to the $t$ clones of items $1,\ldots, \floor{j n/t}$, as well as to some clones of $\floor{j n / t+1}$ in $Z_N^t$.
By $\weakpropcond(t,j)$ $Z_i^t[j]\leq \floor{j n /t + 1}$, so the clones of $Z_i^t[j]$ can be mapped to the remaining clones of $\floor{j n /t + 1}$ and to lower-valued items.
\end{itemize}

\begin{table}
	\begin{center}
		\begin{tabular}{cc}
			\begin{minipage}{.5\linewidth}
				\begin{tabular}{|p{5mm}|p{5mm}|p{5mm}|p{5mm}|p{5mm}|p{5mm}|}
					\hline
					1 & 1 & 1 & 1 & 1 & 1\\
					\hline
					2 & 2 & 2 & 2 & 2 & 2 \\
					\hline
					3 & 3 & 3 & 3 & 3 & 3 \\
					\hline
					4 & 4 & 4 & 4 & 4 & 4\\
					\hline
					5 & 5 & 5 & 5 & 5 & 5\\
					\hline
					6 & 6 & 6 & 6 & 6 & 6\\
					\hline
					7 & 7 & 7 & 7 & 7 & 7 \\
					\hline
					8 & 8 & 8 & 8 & 8 & 8\\
					\hline
					9 & 9 & 9 & 9 & 9 & 9\\
					\hline
					10 & 10 & 10 & 10 & 10 & 10\\
					\hline
				\end{tabular}
                \\
                \centering
                $Z_N^t$
			\end{minipage}	
			&  
			\begin{minipage}{.5\linewidth}
				\begin{tabular}{|p{5mm}|p{5mm}|p{5mm}|p{5mm}|p{5mm}|p{5mm}|}
					\hline
					1  & 1 & 1 & 1 & 1 & 1\\
					\hline
					1 &  1 & 1 & 1 & 2 & 2\\
					\hline
					2 &  2 & 2 & 2 & 2 & 2 \\
					\hline
					2 &  2 & 4 & 4 & 4 & 4 \\
					\hline
					4 &  4 & 4 & 4 & 4 & 4 \\
					\hline
					6 &  6 & 6 & 6 & 6 & 6\\
					\hline
					6 & 6  & 6  & 6 & 7 & 7\\
					\hline
					7 & 7  & 7  & 7 & 7 & 7 \\
					\hline
					7 & 7  & 9  & 9 & 9 & 9 \\
					\hline
					9 & 9  & 9  & 9 & 9 & 9 \\
					\hline
				\end{tabular}
                \\
                \centering
                $D_i$
			\end{minipage}	
		\end{tabular}
	\end{center}
	\caption{
		\label{tab:weakprop-implies-prop1}
		Example for the proof of \Cref{thm:weak-full-balance-implies-ordinal-prop1}.
		Here $n=10,t=6$.
	}
\end{table}

For illustration, \Cref{tab:weakprop-implies-prop1} shows the items in $Z_N^t$ (left), and the items $1$ and $\floor{j n / t + 1}$ for $j\in[t-1]$ in $D_i$ (right) for $n=10,t=6$. One can see that each item in the $D_i$ table is at least as good as the item at the same location in the $Z_N^t$ table, as required.
\end{proof}

We proved in \Cref{sec:top-balance} that there is no fully-balanced sequence for $n=12$. In contrast, we could find a weakly-fully-balanced sequence for $n=12$ does exist;
see \Cref{table-wprop-12} in \Cref{app:fully-balanced-sequences}. 

\begin{corollary}
    A perpetually ordinally PROP1 repeated assignment exists for all $n\leq 12$.
\end{corollary}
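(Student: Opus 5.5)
The corollary combines two earlier results: \Cref{thm:weak-full-balance-implies-ordinal-prop1}, which turns weak full-balance into perpetual ordinal PROP1, and the existence results for small $n$. So it suffices to exhibit, for every $n\leq 12$, a weakly fully-balanced permutation sequence of length $n$.

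For $n\leq 11$, \Cref{thm:fully-balanced} gives a fully-balanced sequence. Full balance implies weak full balance, because $\ceil{jn/t}\leq \floor{jn/t+1}$ for every $t\in[n]$ and $j\in[t]$. Indeed, if $jn/t$ is not an integer the two sides coincide, and if it is an integer the right-hand side is larger by one. Hence every condition $\propcond(t,j)$ implies $\weakpropcond(t,j)$, and each of these sequences is weakly fully-balanced.

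For $n=12$, \Cref{lem:n=6k} rules out full balance, so we use instead the explicit weakly fully-balanced sequence in \Cref{table-wprop-12} of \Cref{app:fully-balanced-sequences}. The only real work is verifying that table. For each $t\in\{2,\ldots,12\}$ and $j\in\{1,\ldots,t-1\}$, one checks that the top-left rectangle with $t$ columns and $\floor{12j/t+1}$ rows contains every label at least $j$ times. This is a finite mechanical check, best done by computer; it is the only step I expect to need care.

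Applying \Cref{thm:weak-full-balance-implies-ordinal-prop1} to each of these sequences then shows that each generates a perpetually ordinally PROP1 repeated assignment, for every $n\leq 12$.
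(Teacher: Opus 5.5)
Your proposal is correct and matches the paper's argument. The corollary comes from applying \Cref{thm:weak-full-balance-implies-ordinal-prop1} to two families of sequences: the fully-balanced sequences of \Cref{thm:fully-balanced} for $n\leq 11$, which are weakly fully-balanced because $\ceil{jn/t}\leq\floor{jn/t+1}$, and the weakly fully-balanced sequence of \Cref{table-wprop-12} for $n=12$, where the finite check you flag is the only remaining work.
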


Unfortunately, the impossibility result for a fully-balanced sequence extends to weakly-fully-balanced sequences, though for larger values of $n$, as shown in \Cref{tab:no-weak-full-balance}. The proof is in \Cref{prop:no-weakprop-all-residues} in \Cref{app:no-weak-balanced-sequence}.

\begin{table}[!ht]
	\centering 
	\begin{tabular}{|c||c|c|c|c|c|c|}
		\hline 
		$n$ & $6k$ & $6k+1$ & $6k+2$ & $6k+3$ & $6k+4$ & $6k+5$ \\ 
		\hline
		\mbox{bounds on $k$} & $k\geq 19$ & $k\geq 11$ & $k\geq 16$ & $k\geq 12$ & $k\geq 17$ & $k\geq 9$ \\ 
		\hline
		\mbox{bounds on $n$} & $n\geq 114$ & $n\geq 67$ & $n\geq 98$ & $n\geq 75$ & $n\geq 106$ & $n\geq 59$ \\ 
		\hline 
	\end{tabular}
	~\\
	~\\
	\caption{Values of $n=6k+j$ with no weakly fully-balanced sequence
	\label{tab:no-weak-full-balance}
	}
\end{table}

\section{Ordinal PROP$c$: Open Question}
\label{sec:ordinal-propc}
Can we satisfy perpetual ordinal PROP$c$ for some $c\geq 2$? For this, it is sufficient to satisfy the following condition for all players $i\in N$, after every day $t \in [n]$, for every $j\in[t]$:
\begin{align}
	\label{eq:weak2propcond}
	Z_i^t[j] \leq \floor{(j+c-1) n/t + 1}.
\end{align}
We call this condition $\weakpropcond(c,t,j)$;
the condition previously called 

\noindent 
$\weakpropcond(t,j)$ is actually $\weakpropcond(1,t,j)$.
For $c=2$, Condition \eqref{eq:weak2propcond} is non-trivial only for $t\geq 3$ and $j\leq n-2$, for example:
\begin{itemize}
\item In the first three days, each player gets at least one item from the top $\floor{2n/3+1}$;
\item In the first four days, each player gets at least one item from the top $\floor{2n/4+1}$ and at least two items from the top $\floor{3n/4+1}$;
\end{itemize}

We call a permutation sequence \emph{$c$-balanced} if it satisfies $\weakpropcond(c,t,j)$ for all $t \in [n]$ and all $j\in[t]$.
A $c$-balanced permutation sequence generates a perpetually ordinally PROP$c$ assignment; the proof is similar to \Cref{thm:weak-full-balance-implies-ordinal-prop1}.


\begin{open}
Is there, for every $n\geq 1$, a $2$-balanced permutation sequence?
A $c$-balanced permutation sequence for some constant $c\geq 2$?
\end{open}
We have preliminary results showing that a $c$-balanced permutation exists for $c\in O(\log n)$, but strengthening these results to $c\in O(1)$ remains challenging.


Another future work direction involves a different possible strengthening of the top-balance condition: every player should get at least one item from the top $\ceil{n/t}$, not only at the first $t$ days, but also on every sequence of $t$ consecutive days. This condition cannot be satisfied already for $n=12$. This is because, in the first three days, there must be three pairwise-disjoint subsets of four players each, who receive the top four items. In particular, the player who gets item 4 on day 3 has not received any top-3 item so far, and thus must receive one of the top 3 items in day 4. This player gets one of the top 6 items both in day 3 and in day 4. Hence, some other player does not get any of the top 6 items in these two consecutive days.
Is there a weaker variant of the top-balance condition, that can be guaranteed to hold on every sequence of $t$ consecutive days?

\section{Related Work}
\label{sec:related}
\subsection{Fair division}
The literature on fair division is very large, with new papers appearing every several weeks. See \citet{amanatidis2022fair,aziz2022algorithmic,nguyen2023complexity,lang2024fair} for some recent surveys.
Most works on fair allocation consider a one-round division, but in recent years there is a growing interest in multiple-round division. Below we present several settings that are related to ours. They are  ordered by increasing difficulty --- from the ``easiest'' ones (where strongest fairness guarantees are possible) to the ``hardest'' ones (where only weak fairness guarantees are possible).

\paragraph{Repeated fair assignment.}
This setting is most closely related to ours, as it involves a single assignment problem (of exactly one item to each player) that is repeated every day.
It was recently studied by \citet{micheel2024fairness}.
They studied the computational complexity of three separate fairness notions:
(a) \emph{Mirrored envy}: if some player envies another player a certain number of times, then the latter envied them the same number of times.
(b) \emph{Equal treatment of equals}: players with the same preferences should get exactly the same final bundle. 
(c) \emph{Minimizing the number of times a player is envious}.

\citet{caragiannis2024repeatedly} study a generalization of repeated fair assignment, in which the value of a player for an item may depend on the number of rounds the item was previously used by the same player. In this setting, even obtaining fairness at the last day becomes challenging, as the round-robin technique might not work. They study the run-time of computing an allocation that maximizes the sum of values (``utilitarian''), and present some special cases in which it is possible to ensure EF1 (envy-free up-to one item) at the last day.

\citet{lim2026fairness} study the computational problem of finding a repeated allocation that maximizes the smallest utility overall, or every individual day. This is NP-hard in general, but can be solved efficiently in some special cases.

\paragraph{Repeated fair allocation.}
This setting is more general than ours, as it allows the number of items allocated each day to be different than the number of players, and do not require that each player receives exactly one item per day \citep{igarashi2024repeated}. Naturally, the attainable fairness guarantees are more restricted. In particular:
(a) When $n\geq 3$ and the number of days is fixed in advance (as in our setting), only  last-day fairness is guaranteed, and only when the number of days is a multiple of $n$; there are no guarantees on fairness after each day.
(b) When $n = 2$ and the number of days is fixed in advance and is even, there are repeated allocations that are envy-free after the last day, and in addition, the allocation in each individual day satisfies a weak variant of EF1.
(c) When the number of rounds can be chosen by the algorithm, 
there are sequences that are simultaneously last-day envy-free and Pareto-efficient, and individual-day PROP1. 

\citet{cookson2025temporal} strengthen their results to \emph{ordinal fairness}, as in our setting. They show polynomial-time algorithms that guarantee the following:
(a) For $n=2$ players: ordinal EF1 after each day, as well as in each individual day;
(b) For $n\geq 3$ players: ordinal EF1 at each individual day, as well as ordinal PROP1 after the last day.
They also prove that, for $n\geq 3$, ordinal EF1 after each day cannot be guaranteed even when all players have identical preferences.

In contrast to these works, we focus on the important special case of one item per player per day, and consider ordinal fairness guarantees that should hold after each day, for any $n$.

\paragraph{Temporal fair allocation.}
This setting is even more general, as it allows the number and values of items to be different in each day. 
The guarantees are naturally weaker; see \citet{he2019achieving,elkind2024temporal,cookson2025temporal}. 
Still, the fact that the same players participate in each daily allocation is advantageous, as it allows us to compensate players who got a low value in early days, by giving them a higher value in later days.

\paragraph{One-shot allocation.}
The most well-studied fair division setting involves a one-shot allocation (a single day). For this setting, there are several algorithms that attain an allocation that is \emph{envy-free up to one item (EF1)} --- a condition stronger than PROP1 \citep{lipton2004approximately,budish2011combinatorial,caragiannis2019unreasonable}. These cannot be applied in our setting, as they might allocate more than one item to the same player in the same day.

\paragraph{One-shot assignment.} 
Here, each player must receive a single item. In this setting every allocation is trivially PROP1 (and EF1), which makes these conditions uninteresting. Fairness is usually attained either by randomization or by monetary transfers; see \citet{basteck2018fair} and \citet{segal2022generalized} respectively for some recent surveys.
In our \emph{repeated} assignment setting, we expect to attain fair outcomes even without randomization or money.

\paragraph{One-shot allocation with cardinality constraints.}
Here, items are partitioned into pre-specified categories, and each player can receive at most a fixed number of items (called the ``capacity'') of each category
 \citep{biswas2018fair,dror2023fair,shoshan2023efficient}.
 The repeated assignment setting can be reduced to this setting: define, for each day, a category containing all items allocated at that day, with a capacity of $1$. There are algorithms that guarantee EF1 (hence also PROP1), but only for specific numeric valuations. For example, the algorithm of \citet{biswas2018fair}, translated to our setting, works as follows: (1) Assign the items in day 1 arbitrarily. (2) In each day $t\geq 2$, order the players by ascending value, from the player with the smallest to the one with the largest total value; give item 1 to the first player in the order, item 2 to the second player, etc. Suppose that, after some day $t$, some player $i$ has a smaller value than some other player $j$, i.e., $v(Z_i^t) <  v(Z_j^t)$. Let $t'$ be the maximum $t'<t$ in which $v(Z_i^{t'}) \geq  v(Z_j^{t'})$ held. Then at day $t'+1$, player $j$ received an item with a higher value than $i$, and at days $t'+2,\ldots,t$, player $i$ received an item with an equal-or-higher value than $j$. Hence, if the item of day $t'+1$ is removed from $j$'s bundle, $i$ does not envy. Hence, the allocation is EF1.
 
 In contrast, our aim is to guarantee \emph{ordinal approximate fairness} --- a condition that holds simultaneously for all numeric valuations consistent with the ranking.
Another advantage of this approach is that our algorithms do not need to know the valuations, as they can allocate positions in a picking sequence.
 
 \paragraph{Picking sequences}
  The output of our algorithms can also be understood as a \emph{picking sequence} --- an order by which each player picks a favorite item from the remaining items. Picking sequences have a very low informational requirements, as players do not have to reveal their valuations --- they only have to pick the best remaining item when their turn arrives \citep{bouveret2011general,brams2004dividing,segalhalevi2020competitive}.

 \paragraph{Online fair allocation.}
 This is the hardest setting studied, as it assumes that the items and their valuations are \emph{not known in advance} (in contrast to temporal fair division). Each day, a new item arrives, and must be allocated immediately and irrevocably to one of the players. As the future is not known, the algorithm might make allocations that are bad in hindsight. This translates to much weaker fairness guarantees. It is not possible to guarantee PROP$c$ for any constant $c$, even in the last round; such a guarantee is possible only if $c$ grows at least like $\sqrt{t}$. See e.g.\citet{benade2018make,benade2024fair,kahana2023leximin,he2019achieving,neoh2025online,zeng2020fairness,jiang2019online}.

\paragraph{Latin squares.}
\citet{kawase2025resource} studied an allocation problem with ``latin square'' constraints. Their goal is not to attain proportionality, but rather to maximize the social welfare.

\begin{credits}
	\subsubsection{\ackname}
	This work has started at the following post on Mathematics Stack Exchange: 
	\url{https://math.stackexchange.com/q/5014281}.
	
	Erel is funded by Israel Science Foundation grant no. 1092/24.
	
	The proof of \Cref{prop:no-weakprop-all-residues} in \Cref{app:no-weak-balanced-sequence} was written by Aristotle \citep{achim2025aristotleimolevelautomatedtheorem} and verified manually by the authors.
	
	\subsubsection{\discintname}
	The authors have no competing interests to declare that are relevant to the content of this article.
\end{credits}

~\\
\let\oldclearpage\clearpage
\let\clearpage\relax
\small
\bibliographystyle{splncs04nat}
\bibliography{main}
\let\clearpage\oldclearpage

\newpage
\appendix
\section{Balanced Sequences}
\label[appendix]{app:fully-balanced-sequences}

\begin{table}
\begin{center}
	\begin{tabular}{cccc}
		\begin{minipage}{.3\linewidth}
\begin{tabular}{|c|c|c|c|c|c|c|}
	\hline
\ifRowIsPlayer
	1 & 7 & 5 & 4 & 2 & 6 & 3 \\
	2 & 6 & 4 & 7 & 3 & 5 & 1 \\
	3 & 5 & 7 & 2 & 6 & 1 & 4 \\ 
	4 & 3 & 6 & 1 & 7 & 2 & 5 \\ 
	5 & 1 & 3 & 6 & 4 & 7 & 2 \\ 
	6 & 4 & 2 & 5 & 1 & 3 & 7 \\ 
	7 & 2 & 1 & 3 & 5 & 4 & 6 \\ 
\else 
	1 & 5 & 7 & 4 & 6 & 3 & 2 \\
	2 & 7 & 6 & 3 & 1 & 4 & 5 \\
	3 & 4 & 5 & 7 & 2 & 6 & 1 \\ 
	4 & 6 & 2 & 1 & 5 & 7 & 3 \\ 
	5 & 3 & 1 & 6 & 7 & 2 & 4 \\ 
	6 & 2 & 4 & 5 & 3 & 1 & 7 \\ 
	7 & 1 & 3 & 2 & 4 & 5 & 6 \\ 
\fi
	\hline
\end{tabular}
		\end{minipage}	
		&  
		\begin{minipage}{.33\linewidth}
\begin{tabular}{|c|c|c|c|c|c|c|c|}
	\hline
\ifRowIsPlayer
	1 & 8 & 6 & 4 & 5 & 3 & 7 & 2 \\
	2 & 7 & 5 & 3 & 8 & 4 & 6 & 1 \\ 
	3 & 6 & 8 & 2 & 4 & 7 & 1 & 5 \\ 
	4 & 5 & 2 & 8 & 7 & 1 & 3 & 6 \\ 
	5 & 4 & 1 & 7 & 3 & 6 & 2 & 8 \\ 
	6 & 3 & 7 & 1 & 2 & 8 & 5 & 4 \\ 
	7 & 2 & 4 & 6 & 1 & 5 & 8 & 3 \\ 
	8 & 1 & 3 & 5 & 6 & 2 & 4 & 7 \\ 
\else
	1 & 8 & 5 & 6 & 7 & 4 & 3 & 2 \\
	2 & 7 & 4 & 3 & 6 & 8 & 5 & 1 \\ 
	3 & 6 & 8 & 2 & 5 & 1 & 4 & 7 \\ 
	4 & 5 & 7 & 1 & 3 & 2 & 8 & 6 \\ 
	5 & 4 & 2 & 8 & 1 & 7 & 6 & 3 \\ 
	6 & 3 & 1 & 7 & 8 & 5 & 2 & 4 \\ 
	7 & 2 & 6 & 5 & 4 & 3 & 1 & 8 \\ 
	8 & 1 & 3 & 4 & 2 & 6 & 7 & 5 \\ 
\fi
	\hline
\end{tabular}
		\end{minipage}	
		&  
		\begin{minipage}{.36\linewidth}
\begin{tabular}{|c|c|c|c|c|c|c|c|c|}
	\hline
\ifRowIsPlayer
	1 & 9 & 6 & 5 & 3 & 8 & 4 & 7 & 2 \\
	2 & 8 & 5 & 7 & 4 & 3 & 9 & 6 & 1 \\ 
	3 & 6 & 9 & 2 & 8 & 4 & 7 & 1 & 5 \\ 
	4 & 7 & 3 & 9 & 2 & 5 & 1 & 8 & 6 \\ 
	5 & 3 & 8 & 1 & 6 & 9 & 2 & 4 & 7 \\ 
	6 & 2 & 7 & 4 & 9 & 1 & 8 & 5 & 3 \\ 
	7 & 5 & 2 & 8 & 1 & 6 & 3 & 9 & 4 \\ 
	8 & 4 & 1 & 6 & 7 & 2 & 5 & 3 & 9 \\ 
	9 & 1 & 4 & 3 & 5 & 7 & 6 & 2 & 8 \\ 
\else
	1 & 9 & 8 & 5 & 7 & 6 & 4 & 3 & 2 \\
	2 & 6 & 7 & 3 & 4 & 8 & 5 & 9 & 1 \\ 
	3 & 5 & 4 & 9 & 1 & 2 & 7 & 8 & 6 \\ 
	4 & 8 & 9 & 6 & 2 & 3 & 1 & 5 & 7 \\ 
	5 & 7 & 2 & 1 & 9 & 4 & 8 & 6 & 3 \\ 
	6 & 3 & 1 & 8 & 5 & 7 & 9 & 2 & 4 \\ 
	7 & 4 & 6 & 2 & 8 & 9 & 3 & 1 & 5 \\ 
	8 & 2 & 5 & 7 & 3 & 1 & 6 & 4 & 9 \\ 
	9 & 1 & 3 & 4 & 6 & 5 & 2 & 7 & 8 \\ 
\fi
	\hline
\end{tabular}
		\end{minipage}	
	\end{tabular}
\end{center}
\caption{
	Fully-balanced permutation sequences for $n\in\{7,8,9\}$.
\label{tab:n=789}
}
\end{table}

\begin{table}
\begin{center}
	\begin{tabular}{cccc}
		\begin{minipage}{.4\linewidth}
\begin{tabular}{|c|c|c|c|c|c|c|c|c|c|}
	\hline
\ifRowIsPlayer
	1 & 10 & 7 & 4 & 6 & 3 & 9 & 2 & 8 & 5\\
	2 & 9 & 6 & 3 & 8 & 5 & 10 & 4 & 1 & 7\\
	3 & 8 & 2 & 6 & 10 & 4 & 7 & 5 & 9 & 1\\ 
	4 & 6 & 10 & 2 & 7 & 1 & 8 & 3 & 5 & 9\\ 
	5 & 7 & 3 & 10 & 2 & 9 & 6 & 1 & 4 & 8\\ 
	6 & 4 & 8 & 1 & 9 & 2 & 5 & 10 & 7 & 3\\ 
	7 & 3 & 9 & 5 & 1 & 10 & 4 & 8 & 6 & 2\\ 
	8 & 5 & 1 & 9 & 4 & 7 & 3 & 6 & 2 & 10\\ 
	9 & 2 & 5 & 8 & 3 & 6 & 1 & 7 & 10 & 4\\ 
	10 & 1 & 4 & 7 & 5 & 8 & 2 & 9 & 3 & 6\\ 
\else
	1 & 10 & 8 & 6 & 7 & 4 & 9 & 5 & 2 & 3\\
	2 & 9 & 3 & 4 & 5 & 6 & 10 & 1 & 8 & 7\\
	3 & 7 & 5 & 2 & 9 & 1 & 8 & 4 & 10 & 6\\ 
	4 & 6 & 10 & 1 & 8 & 3 & 7 & 2 & 5 & 9\\ 
	5 & 8 & 9 & 7 & 10 & 2 & 6 & 3 & 4 & 1\\ 
	6 & 4 & 2 & 3 & 1 & 9 & 5 & 8 & 7 & 10\\ 
	7 & 5 & 1 & 10 & 4 & 8 & 3 & 9 & 6 & 2\\ 
	8 & 3 & 6 & 9 & 2 & 10 & 4 & 7 & 1 & 5\\ 
	9 & 2 & 7 & 8 & 6 & 5 & 1 & 10 & 3 & 4\\ 
	10 & 1 & 4 & 5 & 3 & 7 & 2 & 6 & 9 & 8\\ 
\fi
	\hline
\end{tabular}
		\end{minipage}	
		&  
		\begin{minipage}{.6\linewidth}
\begin{tabular}{|c|c|c|c|c|c|c|c|c|c|c|}
	\hline
\ifRowIsPlayer
	1 & 11 & 8 & 6 & 5 & 4 & 10 & 3 & 9 & 7 & 2\\
	2 & 10 & 7 & 5 & 4 & 11 & 8 & 1 & 6 & 9 & 3\\ 
	3 & 9 & 6 & 11 & 2 & 8 & 5 & 7 & 1 & 4 & 10\\ 
	4 & 8 & 11 & 3 & 7 & 2 & 9 & 6 & 5 & 10 & 1\\ 
	5 & 7 & 4 & 2 & 11 & 10 & 3 & 9 & 8 & 1 & 6\\ 
	6 & 5 & 3 & 10 & 9 & 1 & 7 & 11 & 4 & 2 & 8\\ 
	7 & 4 & 10 & 1 & 8 & 5 & 11 & 2 & 3 & 6 & 9\\ 
	8 & 3 & 9 & 4 & 1 & 6 & 2 & 10 & 7 & 5 & 11\\ 
	9 & 1 & 5 & 7 & 10 & 3 & 6 & 8 & 2 & 11 & 4\\ 
	10 & 6 & 2 & 9 & 3 & 7 & 1 & 4 & 11 & 8 & 5\\ 
	11 & 2 & 1 & 8 & 6 & 9 & 4 & 5 & 10 & 3 & 7\\ 
\else
	1 & 9 & 11 & 7 & 8 & 6 & 10 & 2 & 3 & 5 & 4\\
	2 & 11 & 10 & 5 & 3 & 4 & 8 & 7 & 9 & 6 & 1\\ 
	3 & 8 & 6 & 4 & 10 & 9 & 5 & 1 & 7 & 11 & 2\\ 
	4 & 7 & 5 & 8 & 2 & 1 & 11 & 10 & 6 & 3 & 9\\ 
	5 & 6 & 9 & 2 & 1 & 7 & 3 & 11 & 4 & 8 & 10\\ 
	6 & 10 & 3 & 1 & 11 & 8 & 9 & 4 & 2 & 7 & 5\\ 
	7 & 5 & 2 & 9 & 4 & 10 & 6 & 3 & 8 & 1 & 11\\ 
	8 & 4 & 1 & 11 & 7 & 3 & 2 & 9 & 5 & 10 & 6\\ 
	9 & 3 & 8 & 10 & 6 & 11 & 4 & 5 & 1 & 2 & 7\\ 
	10 & 2 & 7 & 6 & 9 & 5 & 1 & 8 & 11 & 4 & 3\\ 
	11 & 1 & 4 & 3 & 5 & 2 & 7 & 6 & 10 & 9 & 8\\ 
\fi
	\hline
\end{tabular}
		\end{minipage}	
	\end{tabular}
\end{center}
\caption{
	Fully-balanced permutation sequences for $n\in\{10,11\}$.
\label{tab:n=1011}
}
\end{table}

\begin{table}[h!]
	\centering 
	\begin{tabular}{|c|c|c|c|c|c|c|c|c|c|c|c|}
		\hline 
\ifRowIsPlayer
		1 & 12 & 9 & 7 & 5 & 11 & 4 & 8 & 3 & 10 & 6 & 2\\ 
		2 & 11 & 8 & 6 & 4 & 12 & 7 & 5 & 1 & 9 & 10 & 3\\ 
		3 & 10 & 7 & 5 & 12 & 9 & 2 & 6 & 11 & 4 & 8 & 1\\ 
		4 & 9 & 12 & 3 & 8 & 7 & 1 & 11 & 6 & 5 & 2 & 10\\ 
		5 & 8 & 11 & 4 & 3 & 2 & 12 & 10 & 7 & 6 & 1 & 9\\ 
		6 & 7 & 5 & 2 & 11 & 10 & 3 & 12 & 9 & 1 & 4 & 8\\ 
		7 & 6 & 4 & 12 & 2 & 8 & 11 & 1 & 10 & 3 & 9 & 5\\ 
		8 & 5 & 10 & 1 & 9 & 4 & 6 & 3 & 12 & 2 & 7 & 11\\ 
		9 & 4 & 6 & 11 & 1 & 5 & 10 & 2 & 8 & 12 & 3 & 7\\ 
		10 & 3 & 2 & 9 & 7 & 1 & 8 & 4 & 5 & 11 & 12 & 6\\ 
		11 & 2 & 1 & 10 & 6 & 3 & 9 & 7 & 4 & 8 & 5 & 12\\ 
		12 & 1 & 3 & 8 & 10 & 6 & 5 & 9 & 2 & 7 & 11 & 4\\ 
\else
		1 & 12 & 11 & 8 & 9 & 10 & 4 & 7 & 2 & 6 & 5 & 3\\ 
		2 & 11 & 10 & 6 & 7 & 5 & 3 & 9 & 12 & 8 & 4 & 1\\ 
		3 & 10 & 12 & 4 & 5 & 11 & 6 & 8 & 1 & 7 & 9 & 2\\ 
		4 & 9 & 7 & 5 & 2 & 8 & 1 & 10 & 11 & 3 & 6 & 12\\ 
		5 & 8 & 6 & 3 & 1 & 9 & 12 & 2 & 10 & 4 & 11 & 7\\ 
		6 & 7 & 9 & 2 & 11 & 12 & 8 & 3 & 4 & 5 & 1 & 10\\ 
		7 & 6 & 3 & 1 & 10 & 4 & 2 & 11 & 5 & 12 & 8 & 9\\ 
		8 & 5 & 2 & 12 & 4 & 7 & 10 & 1 & 9 & 11 & 3 & 6\\ 
		9 & 4 & 1 & 10 & 8 & 3 & 11 & 12 & 6 & 2 & 7 & 5\\ 
		10 & 3 & 8 & 11 & 12 & 6 & 9 & 5 & 7 & 1 & 2 & 4\\ 
		11 & 2 & 5 & 9 & 6 & 1 & 7 & 4 & 3 & 10 & 12 & 8\\ 
		12 & 1 & 4 & 7 & 3 & 2 & 5 & 6 & 8 & 9 & 10 & 11\\ 
\fi
		\hline
	\end{tabular}
~\\
~\\
    \caption{weakly-fully-balanced sequence for $n=12$}\label{table-wprop-12}
\end{table}

\newpage
\section{Integers for which No Fully-Balanced Sequence Exists}
\label[appendix]{app:no-balanced-sequence}

We prove the following extension of \Cref{lem:n=6k}.

\begin{proposition}
	There is no fully-balanced permutation sequence of length $n$ in any 	of the following ranges:
	\[
	\begin{array}{c|c|c}
		n\bmod 6&n&\text{first value covered}\tabularnewline \hline
		0&n=6k,\ k\ge   2&12\\
		1&n=6k+1,\ k\ge 11&67\\
		2&n=6k+2,\ k\ge 3&20\\
		3&n=6k+3,\ k\ge 8&51\\
		4&n=6k+4,\ k\ge 4&28\\
		5&n=6k+5,\ k\ge 9&59.
	\end{array}
	\]
	In particular, no fully-balanced sequence exists for any $n > 61$.
\end{proposition}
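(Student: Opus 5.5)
We will follow the template of the proof of \Cref{lem:n=6k}: take a fully-balanced sequence of length $n=6k+r$ and show that the first few days (four, or occasionally a few more) force a contradiction. As there, we write $A_i,B_i,C_i,D_i,\dots$ for the player who receives item $i$ on days $1,2,3,4,\dots$. The residue $r=0$ is exactly \Cref{lem:n=6k}. For each other residue $r\in\{1,\dots,5\}$ we repeat the same chain of forced inclusions, with every threshold $\ceil{jn/t}$ computed explicitly for $n=6k+r$.

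\textbf{Step 1.} The first two days are determined up to the rounding slack. $\propcond(2,1)$ forces every player outside the top $\ceil{n/2}$ on day~1 into the top $\ceil{n/2}$ on day~2. For even $n$ the two top halves are therefore complementary. For odd $n$ they overlap in exactly one player.

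\textbf{Step 2.} For day~3 we use $\propcond(3,1)$ and $\propcond(3,2)$. The players who have not yet been in the top $\ceil{n/3}$ must occupy the top $\ceil{n/3}$ positions of day~3, up to a slack of $O(r)$ cells. The players who have been in the top $\ceil{2n/3}$ only once, roughly the bottom halves of days 1 and~2, are constrained into the top $\ceil{2n/3}$ of day~3. This pins down a large set of players in specific row bands of column~3. We carry the bookkeeping as ``all but at most $s$'' statements, where $s$ is a small constant depending on $r$, coming from the differences between $\ceil{\cdot}$ and the exact fractions.

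\textbf{Step 3.} On day~4 we count demand against supply in the top $\ceil{n/2}=\ceil{2n/4}$ rows, as in items \ref{ps-4}--\ref{ps-5}.
\begin{itemize}
\item $\propcond(4,2)$ forces every player who has been in the top $\ceil{n/2}$ fewer than twice into those rows. This includes nearly all of $C_{\ceil{n/2}+1},\dots,C_n$, about $n/2$ players.
\item $\propcond(4,1)$ additionally forces the players in rows $\ceil{n/4}+1,\dots,\ceil{n/3}$ of day~3 into the top $\ceil{n/4}$ of day~4. These players were not in the top $\ceil{n/4}$ on days 1--2, and we must check that they are distinct from the first group.
\end{itemize}
Together the two groups contain about $n/2+(n/3-n/4)-O(1)=n/2+n/12-O(1)$ players, all demanding top-$\ceil{n/2}$ slots on day~4. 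For $k$ large enough, the surplus $n/12-O(1)$ exceeds the available slack, which gives the contradiction.

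\textbf{Step 4.} The threshold on $k$ in each row of the table is exactly where the surplus first beats the accumulated rounding error. Each threshold therefore reduces to a linear inequality in $k$, which we would solve case by case. The final claim then follows from \Cref{thm:finitely-many-balanced}'s bookkeeping. The largest $n$ not covered by any row is $61=6\cdot 10+1$: the residue-$1$ row starts at $k=11$, and $n=61$ lies below the start of every other residue class with the same residue. Every $n>61$ falls into some row.

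\textbf{Main obstacle.} The hard part is the odd residues $r\in\{1,3,5\}$. There the day-1/day-2 halves overlap, so Step~1 gives only ``all but one'' information. That slack propagates and grows through days 3 and~4, which is why the thresholds are much larger ($k\ge 8,9,11$).

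For these residues we would first try the same four-day argument with careful tracking of the exceptional players. If the slack swallows the $n/12$ surplus, we would push to day~5 or~6 and use $\propcond(5,\cdot)$ and $\propcond(6,\cdot)$, whose thresholds $\ceil{jn/6}$ align with $n\bmod 6$. The aim there is to regain a linear surplus, at the cost of a bigger constant in the bound on $k$.
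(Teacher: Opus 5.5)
Your strategy is the paper's own four-day count. All of $C_{\ceil{n/2}+1},\dots,C_n$ (all but at most one, for odd $n$) need a top-$\ceil{n/2}$ slot on day~4 by $\propcond(4,2)$. The players in day-3 rows $\ceil{n/4}+1,\dots,\ceil{n/3}$ who missed the top $\ceil{n/3}$ on days 1--2 need a top-$\ceil{n/4}$ slot on day~4 by $\propcond(4,1)$. The two groups occupy different rows of day~3, so distinctness is automatic. $\propcond(3,2)$ is never needed; incidentally, the players in the top $\ceil{2n/3}$ only once are roughly the bottom \emph{thirds} of days 1 and~2, not the bottom halves.

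The problem is that the proposal stops where the statement's actual content begins. You never fix the slack constants, so the thresholds $k\ge 11,3,8,4,9$ are asserted rather than derived. Your ``main obstacle'' paragraph also doubts that four days suffice for odd residues. They do, and no fifth or sixth day is needed. The surplus is only about $k/2$, and the odd residues simply carry larger additive constants, which is why their thresholds are larger.

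To close the gap, carry out the bookkeeping as the paper does. Let $\mathcal{A}_k$ be the players in the bottom $n-\ceil{n/2}$ rows of day~1 who sit in day-2 rows $\ceil{n/3}+1,\dots,\ceil{n/2}$. Let $\mathcal{B}_k$ be the symmetric set with days 1 and~2 swapped. By $\propcond(2,1)$ these are disjoint sets of size at least $k$ (at least $k-1$ for $r=1$). By $\propcond(3,1)$ they lie in the top $\ceil{n/3}$ rows of day~3, leaving at most $3,1,1,2,2$ other rows there for $r=1,\dots,5$. Alternatively, the set $S$ of all players who missed the top $\ceil{n/3}$ on days 1--2 satisfies $|S|\ge n-2\ceil{n/3}$, which gives the same or smaller slack.

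Because $\ceil{n/4}$ depends on the parity of $k$, each residue splits into two subcases. For example, take $n=6k+1$. The second group has at least $k/2-3$ players ($k$ even) or $k/2-5/2$ ($k$ odd). The first group has at least $3k-1$ players, and only $3k+1$ top-$\ceil{n/2}$ slots exist on day~4. This is contradictory for $k>10$ resp.\ $k>9$, i.e.\ for all $k\ge 11$. The other residues work the same way and give the stated thresholds. After that, your check that $61$ is the largest uncovered $n$ finishes the proof.
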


The technique used here is similar to that in the proof of \Cref{lem:n=6k}. We also use the same notation $A_i, B_i, C_i, D_i$ as in that proof.

\subsection{Case $n=6k+1$}
The following formulas hold:
$$\Big\lceil \frac{6k+1}{2}\Big\rceil=3k+1,\ \ \Big\lceil \frac{6k+1}{3}\Big\rceil=2k+1\ \ $$
and 
\begin{eqnarray*} 
\Big\lceil \frac{6k+1}{4}\Big\rceil =& 
\left\{\begin{array}{ll}
{(6k+4)}/{4} & \mbox{if $k$ is even} \\ 
{(6k+2)}/{4} & \mbox{if $k$ is odd}
\end{array}
\right.
\end{eqnarray*}
We have the following: 
$\{A_{3k+2},\ldots,A_{6k+1}\}\subset \{B_{1},\ldots,B_{3k+1}\}$ and 
$\{B_{3k+2},\ldots,B_{6k+1}\}\subset \{A_{1},\ldots,A_{3k+1}\}$. 
Let 
$$\mathcal{A}_k=\{A_{3k+2},\ldots,A_{6k+1}\}\cap \{B_{2k+2},\ldots,B_{3k+1}\}$$
and 
$$\mathcal{B}_k=\{B_{3k+2},\ldots,B_{6k+1}\}\cap \{A_{2k+2},\ldots,A_{3k+1}\}.$$
Thus, $\mathcal{A}_k$ and $\mathcal{B}_k$ are disjoint and 
$|\mathcal{A}_k|\geq k-1$, $|\mathcal{B}_k|\geq k-1$. 

\subsubsection{Even $k$}
Let $a_k=\frac{6k+4}{4}$ and $\mathcal{C}_k=\{C_{a_k+1},\ldots,C_{2k+1}\}$. 
Since $\mathcal{A}_k\cup \mathcal{B}_k\subset \{C_{1},\ldots,C_{2k+1}\}$, 
then 
$$\Big|(\mathcal{A}_k\cup \mathcal{B}_k)\cap \mathcal{C}_k\Big|
\geq 2k+1-a_k-3=\frac{1}{2}k-3.$$
We know that 
$(\mathcal{A}_k\cup \mathcal{B}_k)\cap \mathcal{C}_k\subset \{D_{1},\ldots,D_{a_k}\}$.  
Also, 
$$\Big|\{C_{3k+2},\ldots,C_{6k+1}\}\cap \{D_{1},\ldots,D_{3k+1}\}\Big|
\geq 3k-1.$$
There is a contradiction, if 
$3k-1+\frac{1}{2}k-3 > 3k+1$.  Hence, there is no solution, if $k>10$, or 
equivalently, $k\geq 12$ and $n\geq 73$. 

\subsubsection{Odd $k$}
Using an argument similar to even $k$, we get a contradiction, if 
$$3k-1+\frac{1}{2}k-\frac{5}{2} > 3k+1.$$
Hence, there are no solutions if $k\geq 11$ or $n\geq 67$. 

\subsection{Case $n=6k+2$}
The following formulas hold:
$$\Big\lceil \frac{6k+2}{2}\Big\rceil=3k+1,\ \ \Big\lceil \frac{6k+2}{3}\Big\rceil=2k+1\ \ $$
and 
\begin{eqnarray*} 
\Big\lceil \frac{6k+2}{4}\Big\rceil =& 
\left\{\begin{array}{ll}
{(6k+4)}/{4} & \mbox{if $k$ is even} \\ 
{(6k+2)}/{4} & \mbox{if $k$ is odd}
\end{array}
\right.
\end{eqnarray*}
We have the following: 
$\{A_{3k+2},\ldots,A_{6k+2}\} =\{B_{1},\ldots,B_{3k+1}\}$ and 
$\{B_{3k+2},\ldots,B_{6k+2}\} =\{A_{1},\ldots,A_{3k+1}\}$. 
Let 
$$\mathcal{A}_k=\{A_{3k+2},\ldots,A_{6k+2}\}\cap \{B_{2k+2},\ldots,B_{3k+1}\}$$
and 
$$\mathcal{B}_k=\{B_{3k+2},\ldots,B_{6k+2}\}\cap \{A_{2k+2},\ldots,A_{3k+1}\}.$$
Thus, $\mathcal{A}_k$ and $\mathcal{B}_k$ are disjoint and 
$|\mathcal{A}_k|=k$, $|\mathcal{B}_k|=k$. 

\subsubsection{Even $k$}
Let $a_k=\frac{6k+4}{4}$ and $\mathcal{C}_k=\{C_{a_k+1},\ldots,C_{2k+1}\}$. 
Since $\mathcal{A}_k\cup \mathcal{B}_k\subset \{C_{1},\ldots,C_{2k+1}\}$, 
then 
$$\Big|(\mathcal{A}_k\cup \mathcal{B}_k)\cap \mathcal{C}_k\Big|
\geq 2k+1-a_k-1=\frac{1}{2}k-1.$$
We know that 
$(\mathcal{A}_k\cup \mathcal{B}_k)\cap \mathcal{C}_k\subset \{D_{1},\ldots,D_{a_k}\}$.  
However, 
$$\{C_{3k+2},\ldots,C_{6k+2}\} =\{D_{1},\ldots,D_{3k+1}\}.$$
There is a contradiction, if 
$\frac{1}{2}k-1>0$.  Hence, there is no solution, if $k>2$, or 
equivalently, $n\geq 26$. 

\subsubsection{Odd $k$}
Using an argument similar to even $k$, we get a contradiction, if 
$$\frac{1}{2}k-\frac{1}{2} > 0.$$
Hence, there are no solutions if $k>1$ or $n\geq 20$. 

\subsection{Case $n=6k+3$}
The following formulas hold:
$$\Big\lceil \frac{6k+3}{2}\Big\rceil=3k+2,\ \ \Big\lceil \frac{6k+3}{3}\Big\rceil=2k+1\ \ $$
and 
\begin{eqnarray*} 
\Big\lceil \frac{6k+3}{4}\Big\rceil =& 
\left\{\begin{array}{ll}
{(6k+4)}/{4} & \mbox{if $k$ is even} \\ 
{(6k+6)}/{4} & \mbox{if $k$ is odd}
\end{array}
\right.
\end{eqnarray*}
We have the following: 
$\{A_{3k+3},\ldots,A_{6k+3}\} \subset \{B_{1},\ldots,B_{3k+2}\}$ and 
$\{B_{3k+3},\ldots,B_{6k+3}\} \subset \{A_{1},\ldots,A_{3k+2}\}$. 
Let 
$$\mathcal{A}_k=\{A_{3k+3},\ldots,A_{6k+3}\}\cap \{B_{2k+2},\ldots,B_{3k+2}\}$$
and 
$$\mathcal{B}_k=\{B_{3k+3},\ldots,B_{6k+3}\}\cap \{A_{2k+2},\ldots,A_{3k+2}\}.$$
Thus, $\mathcal{A}_k$ and $\mathcal{B}_k$ are disjoint and 
$|\mathcal{A}_k|=k$, $|\mathcal{B}_k|=k$. 

\subsubsection{Even $k$}
Let $a_k=\frac{6k+4}{4}$ and $\mathcal{C}_k=\{C_{a_k+1},\ldots,C_{2k+1}\}$. 
Since $\mathcal{A}_k\cup \mathcal{B}_k\subset \{C_{1},\ldots,C_{2k+1}\}$, 
then 
$$\Big|(\mathcal{A}_k\cup \mathcal{B}_k)\cap \mathcal{C}_k\Big|
\geq 2k+1-a_k-1=\frac{1}{2}k-1.$$
We know that 
$(\mathcal{A}_k\cup \mathcal{B}_k)\cap \mathcal{C}_k\subset 
\{D_{1},\ldots,D_{a_k}\}$.  
However, 
$$\Big|\{C_{3k+3},\ldots,C_{6k+3}\}\cap \{D_{1},\ldots,D_{3k+2}\}\Big|
\geq 3k.$$

There is a contradiction, if 
$\frac{1}{2}k-1>2$.  Hence, there is no solution, if $k>6$, or 
equivalently, $n\geq 51$. 

\subsubsection{Odd $k$}
Using an argument similar to even $k$, we get a contradiction, if 
$$\frac{1}{2}k-\frac{3}{2} > 2.$$
Hence, there are no solutions if $k>7$ or $n\geq 57$. 

\subsection{Case $n=6k+4$}
The following formulas hold:
$$\Big\lceil \frac{6k+4}{2}\Big\rceil=3k+2,\ \ \Big\lceil \frac{6k+4}{3}\Big\rceil=2k+2\ \ $$
and 
\begin{eqnarray*} 
\Big\lceil \frac{6k+4}{4}\Big\rceil =& 
\left\{\begin{array}{ll}
{(6k+4)}/{4} & \mbox{if $k$ is even} \\ 
{(6k+6)}/{4} & \mbox{if $k$ is odd}
\end{array}
\right.
\end{eqnarray*}
We have the following: 
$\{A_{3k+3},\ldots,A_{6k+4}\} =\{B_{1},\ldots,B_{3k+2}\}$ and 
$\{B_{3k+3},\ldots,B_{6k+4}\} =\{A_{1},\ldots,A_{3k+2}\}$. 
Let 
$$\mathcal{A}_k=\{A_{3k+3},\ldots,A_{6k+4}\}\cap \{B_{2k+3},\ldots,B_{3k+2}\}$$
and 
$$\mathcal{B}_k=\{B_{3k+3},\ldots,B_{6k+4}\}\cap \{A_{2k+3},\ldots,A_{3k+2}\}.$$
Thus, $\mathcal{A}_k$ and $\mathcal{B}_k$ are disjoint and 
$|\mathcal{A}_k|=k$, $|\mathcal{B}_k|=k$. 

\subsubsection{Even $k$}
Let $a_k=\frac{6k+4}{4}$ and $\mathcal{C}_k=\{C_{a_k+1},\ldots,C_{2k+2}\}$. 
Since $\mathcal{A}_k\cup \mathcal{B}_k\subset \{C_{1},\ldots,C_{2k+2}\}$, 
then 
$$\Big|(\mathcal{A}_k\cup \mathcal{B}_k)\cap \mathcal{C}_k\Big|
\geq 2k+2-a_k-2=\frac{1}{2}k-1.$$
We know that 
$(\mathcal{A}_k\cup \mathcal{B}_k)\cap \mathcal{C}_k\subset \{D_{1},\ldots,D_{a_k}\}$.  
However, 
$$\{C_{3k+3},\ldots,C_{6k+4}\} =\{D_{1},\ldots,D_{3k+2}\}.$$
There is a contradiction, if 
$\frac{1}{2}k-1>0$.  Hence, there is no solution, if $k>2$, or 
equivalently, $n\geq 28$. 

\subsubsection{Odd $k$}
Using an argument similar to even $k$, we get a contradiction, if 
$$\frac{1}{2}k-\frac{3}{2} > 0.$$
Hence, there are no solutions if $k>3$ or $n\geq 34$. 

\subsection{Case $n=6k+5$}
The following formulas hold:
$$\Big\lceil \frac{6k+5}{2}\Big\rceil=3k+3,\ \ \Big\lceil \frac{6k+5}{3}\Big\rceil=2k+2\ \ $$
and 
\begin{eqnarray*} 
\Big\lceil \frac{6k+5}{4}\Big\rceil =& 
\left\{\begin{array}{ll}
{(6k+8)}/{4} & \mbox{if $k$ is even} \\ 
{(6k+6)}/{4} & \mbox{if $k$ is odd}
\end{array}
\right.
\end{eqnarray*}
We have the following: 
$\{A_{3k+4},\ldots,A_{6k+5}\} \subset \{B_{1},\ldots,B_{3k+3}\}$ and 
$\{B_{3k+4},\ldots,B_{6k+5}\} \subset \{A_{1},\ldots,A_{3k+3}\}$. 
Let 
$$\mathcal{A}_k=\{A_{3k+4},\ldots,A_{6k+5}\}\cap \{B_{2k+3},\ldots,B_{3k+3}\}$$
and 
$$\mathcal{B}_k=\{B_{3k+4},\ldots,B_{6k+5}\}\cap \{A_{2k+3},\ldots,A_{3k+3}\}.$$
Thus, $\mathcal{A}_k$ and $\mathcal{B}_k$ are disjoint and 
$|\mathcal{A}_k|=k$, $|\mathcal{B}_k|=k$. 

\subsubsection{Even $k$}
Let $a_k=\frac{6k+8}{4}$ and $\mathcal{C}_k=\{C_{a_k+1},\ldots,C_{2k+2}\}$. 
Since $\mathcal{A}_k\cup \mathcal{B}_k\subset \{C_{1},\ldots,C_{2k+2}\}$, 
then 
$$\Big|(\mathcal{A}_k\cup \mathcal{B}_k)\cap \mathcal{C}_k\Big|
\geq 2k+2-a_k-2=\frac{1}{2}k-2.$$
We know that 
$(\mathcal{A}_k\cup \mathcal{B}_k)\cap \mathcal{C}_k\subset 
\{D_{1},\ldots,D_{a_k}\}$.  
However, 
$$\Big|\{C_{3k+4},\ldots,C_{6k+5}\}\cap \{D_{1},\ldots,D_{3k+3}\}\Big|
\geq 3k+1.$$
There is a contradiction, if 
$\frac{1}{2}k-2>2$.  Hence, there is no solution, if $k>8$, or 
equivalently, $n\geq 65$. 

\subsubsection{Odd $k$}
Using an argument similar to even $k$, we get a contradiction, if 
$$\frac{1}{2}k-\frac{3}{2} > 2.$$
Hence, there are no solutions if $k>7$ or $n\geq 59$.

\section{Integers for which No Weakly Fully-Balanced Sequence Exists}
\label[appendix]{app:no-weak-balanced-sequence}
\begin{proposition}
\label{prop:no-weakprop-all-residues}
There is no weakly-fully-balanced permutation sequence of length $n$ in any
of the following ranges:
\[
\begin{array}{c|c|c}
	n\bmod 6&n&\text{first value covered}\tabularnewline \hline
	0&n=6k,\ k\ge19&114\\
	1&n=6k+1,\ k\ge11&67\\
	2&n=6k+2,\ k\ge16&98\\
	3&n=6k+3,\ k\ge12&75\\
	4&n=6k+4,\ k\ge17&106\\
	5&n=6k+5,\ k\ge9&59.
\end{array}
\]
In particular, no weakly-fully-balanced sequence exists for any $n > 108$.
\end{proposition}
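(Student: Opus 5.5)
We will reuse the four-day counting argument of \Cref{lem:n=6k} and of \Cref{app:no-balanced-sequence}, replacing every $\propcond(t,j)$ by $\weakpropcond(t,j)$, whose thresholds are $\floor{jn/t+1}$. As before, $A_i,B_i,C_i,D_i$ denote the players who receive item $i$ on days $1,2,3,4$. The only conditions used are $\weakpropcond(2,1)$, $\weakpropcond(3,1)$, $\weakpropcond(4,1)$ and $\weakpropcond(4,2)$, with thresholds
\[
h_2=\floor{n/2+1},\quad h_3=\floor{n/3+1},\quad q=\floor{n/4+1},\quad h_4=\floor{n/2+1}=h_2 .
\]
For each residue $n=6k+r$ we compute these explicitly, splitting further by the parity of $k$ where $\floor{n/4}$ requires it.

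\textbf{Step 1 (days 1--2).} Any player outside $\rangeA{1}{h_2}$ must lie in $\rangeB{1}{h_2}$. Since $h_2$ may exceed $n/2$ by one, the sets $\rangeA{h_2+1}{n}$ and $\rangeB{h_2+1}{n}$ are disjoint. Each has size $n-h_2$, so about $n/2-1$ players received a top-$h_2$ item on exactly one of the two days. Among them, let $\mathcal{A}_k$ consist of those in low-$A$ positions that hold $B$-positions in $(h_3,h_2]$, and define $\mathcal{B}_k$ symmetrically. Counting gives $|\mathcal{A}_k|,|\mathcal{B}_k|\ge h_2-h_3-(2h_2-n)$, which is about $k$ minus a small constant. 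Every player in $\mathcal{A}_k\cup\mathcal{B}_k$ has not yet received a top-$h_3$ item, so by $\weakpropcond(3,1)$ all of them lie in $\rangeC{1}{h_3}$.

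\textbf{Step 2 (day 3 into day 4).} Let $\mathcal{C}_k=\rangeC{q+1}{h_3}$. Then $(\mathcal{A}_k\cup\mathcal{B}_k)\cap\mathcal{C}_k$ has size at least $|\mathcal{A}_k|+|\mathcal{B}_k|-q$, which is about $2k-3k/2=k/2$ minus a constant. These players have received no top-$q$ item in days 1--3, so by $\weakpropcond(4,1)$ they lie in $\rangeD{1}{q}\subseteq\rangeD{1}{h_4}$. Next, every $C_i$ with $i>h_2$ has at most one top-$h_4$ item so far. This is because, apart from the at most $2h_2-n$ "doubled" players, each player had a top-$h_2$ item on exactly one of days 1--2. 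So by $\weakpropcond(4,2)$, at least $(n-h_2)-(2h_2-n)$ of $\rangeC{h_2+1}{n}$ must lie in $\rangeD{1}{h_4}$. These players are disjoint from $\mathcal{C}_k$ because their day-3 positions differ.

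\textbf{Step 3 (contradiction and arithmetic).} Summing the two disjoint sets forced into $\rangeD{1}{h_4}$ gives roughly $(n/2-c_1)+(k/2-c_2)$ players, and this must not exceed $h_4\approx n/2+1$. The inequality fails once $k/2$ exceeds an explicit constant. That constant depends on the residue and on the parity of $k$, through the floor slacks in $h_2,h_3,q$. We then take, for each residue, the maximum of the even and odd thresholds to get the table entries. The final claim follows as in \Cref{thm:finitely-many-balanced}: listing the exceptions shows the largest uncovered $n$ is $108=6\cdot18$.

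\textbf{Main obstacle.} The difficulty is the careful accounting of the "$+1$" slack in $\weakpropcond$. When $n/2$, $n/3$ or $n/4$ is an integer, the thresholds grow by one. This creates overlaps, namely players in the top-$h_2$ set on both days, and these overlaps weaken each count by a constant. The growth of these losses is why the bound for $n\equiv0$ jumps to $k\ge19$. I would handle this by bounding each overlap explicitly by $2h_2-n$, $3h_3-n-\ldots$, and similar quantities, and checking each of the twelve (residue, parity) subcases separately, using the fact that the first-day assignment can be fixed to the identity without loss of generality.
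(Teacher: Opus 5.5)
Your proposal is correct and is essentially the paper's own proof. Your $\mathcal{A}_k,\mathcal{B}_k$ are the paper's $Z_2,Z_1$, and your two disjoint groups forced into $\{D_1,\ldots,D_{h_4}\}$, of sizes at least $2(h_2-h_3-e)-q$ and $n-h_2-e$ with $e=2h_2-n$, match the paper's groups exactly, so you arrive at the same necessary condition $n\le 2h_3+q+3e$; the only step you leave undone is the routine residue-and-parity arithmetic, which does yield the stated table and the largest uncovered value $n=108$.
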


\begin{proof}
Put
\[
a=\left\lfloor\frac n2\right\rfloor+1,\qquad
b=\left\lfloor\frac n3\right\rfloor+1,\qquad
d=\left\lfloor\frac n4\right\rfloor+1,\qquad
e=2a-n.
\]
The number $e$ is $2$ for even $n$ and $1$ for odd $n$.
By $\weakpropcond(2,1)$, every player receiving an item ranked above $a$
in day 1 receives an item of rank at most $a$ in day 2.  Consequently,
at most $e$ players receive top-$a$ items in both days 1 and 2: the
$2a$ top-item incidences are distributed among $n$ players.

Using the notation $A_i, B_i, C_i, D_i$ as in the proof of \Cref{lem:n=6k}, define:
\begin{align*}
Z_1=\rangeA{b+1}{a}\cap\rangeB{a+1}{n},
\\
Z_2=\rangeB{b+1}{a}\cap\rangeA{a+1}{n}.
\end{align*}
At most $e$ players in $\rangeA{b+1}{a}$ can have received a top-$a$ item in day 2. So at most $e$ players 
in $\rangeA{b+1}{a}$ are not in $\rangeB{a+1}{n}$. 
Hence, $|Z_1|\geq a-b-e$. Analogous arguments imply $|Z_2|\geq a-b-e$.

The sets $Z_1,Z_2$ are disjoint by construction.  By $\weakpropcond(3,1)$, all their players must
receive a top-$b$ item in day 3, so
\[
Z_1\cup Z_2\subseteq\rangeC{1}{b}.
\]
After discarding the $d$ players receiving a top-$d$ item in day 3,
at least
\[
2(a-b-e)-d
\]
of these players must receive a top-$d$ item in day 4, by
$\weakpropcond(4,1)$.
Independently, consider the $n-a$ players in $\rangeC{a+1}{n}$.  At most
$e$ of them received top-$a$ items in both A and B.  Every other such
player has received a top-$a$ item at most once in the first three days,
and hence must receive a top-$a$ item in D by
$\weakpropcond(4,2)$.  This contributes at least $n-a-e$ players to
$\rangeD{1}{a}$.  They are disjoint from the former contribution, since
their C-ranks are respectively above $a$ and between $d+1$ and $b$.
Therefore a necessary condition is
\[
a\ge \bigl(2(a-b-e)-d\bigr)+(n-a-e),
\]
or equivalently
\begin{align}
	\label{eq:weak-balance-general-necessary}
	n&\le 2b+d+3e.
\end{align}
It remains to check what values of $n$ satisfy \eqref{eq:weak-balance-general-necessary}. Here we split by $n$ mod $6$:
\begin{itemize}
\item For $n=6k$ and $k$ even, we have $b=2k+1, d=1.5k+1, e=2$, so \eqref{eq:weak-balance-general-necessary} becomes $6k \leq 4k + 2 + 1.5k + 1 + 6 = 5.5k + 9$, equivalently $k\leq 18$.

If $k$ is odd, then $d$ becomes $1.5k+0.5$, so \eqref{eq:weak-balance-general-necessary} becomes $6k \leq 5.5k + 8.5$, equivalently $k\leq 17$.

\item For $n=6k+1$ and $k$ even, we have $b=2k+1, d=1.5k+1, e=1$, so \eqref{eq:weak-balance-general-necessary} becomes $6k+1 \leq 4k + 2 + 1.5k + 1 + 3 = 5.5k + 6$, equivalently $k\leq 10$.

If $k$ is odd, then \eqref{eq:weak-balance-general-necessary} becomes $6k+1\leq 5.5k + 5.5$, equivalently $k\leq 9$.
	
\item For $n=6k+2$ and $k$ even, we have $b=2k+1, d=1.5k+1, e=2$, so \eqref{eq:weak-balance-general-necessary} becomes $6k+2 \leq 4k + 2 + 1.5k + 1 + 6 = 5.5k + 9$, equivalently $k\leq 14$.

If $k$ is odd, then $d$ becomes $1.5k+1.5$, so \eqref{eq:weak-balance-general-necessary} becomes $6k+2 \leq 5.5k + 9.5$, equivalently $k\leq 15$.

\item For $n=6k+3$ and $k$ even, we have $b=2k+2, d=1.5k+1, e=1$, so \eqref{eq:weak-balance-general-necessary} becomes $6k+3 \leq 4k + 4 + 1.5k + 1 + 3 = 5.5k + 8$, equivalently $k\leq 10$.

If $k$ is odd, then $d$ becomes $1.5k+1.5$, so \eqref{eq:weak-balance-general-necessary} becomes $6k+3 \leq 5.5k + 8.5$, equivalently $k\leq 11$.

\item For $n=6k+4$ and $k$ even, we have $b=2k+2, d=1.5k+2, e=2$, so \eqref{eq:weak-balance-general-necessary} becomes $6k+4 \leq 4k + 4 + 1.5k + 2 + 6 = 5.5k + 12$, equivalently $k\leq 16$.

If $k$ is odd, then $d$ becomes $1.5k+1.5$, so \eqref{eq:weak-balance-general-necessary} becomes $6k+4 \leq 5.5k + 11.5$, equivalently $k\leq 15$.

\item For $n=6k+5$ and $k$ even, we have $b=2k+2, d=1.5k+2, e=1$, so \eqref{eq:weak-balance-general-necessary} becomes $6k+5 \leq 4k + 4 + 1.5k + 2 + 3 = 5.5k + 9$, equivalently $k\leq 8$.

If $k$ is odd, then $d$ becomes $1.5k+1.5$, so \eqref{eq:weak-balance-general-necessary} becomes $6k+5 \leq 5.5k + 8.5$, equivalently $k\leq 7$.
\end{itemize}
These thresholds correspond to the thresholds in the proposition statement.
\end{proof}

\section{Example of Top-Balanced Sequence Algorithm}
\label[appendix]{app:top-balanced-sequence}
Here, we demonstrate a top-balanced sequence for $n=15$ players.  
This example is sufficiently complex to show how 
this procedure works for any positive integer $n$.  

For day $1$, assign items $1$ through $15$ to players 
$1$ through $15$ in increasing order. 
For day $2$, assign items $1$ through $7$ to players 
$9$ through $15$.  The remaining items can be assigned 
arbitrarily to the remaining players for day $2$.  
For day $t=3$, we have $\lceil \frac{15}{3}\rceil = 5$.  
Notice that for the first $2$ days, there are $10$ unique players 
who were assigned one of the items $1$ through $5$.  
There are $5$ players not in this 
group.  In particular, these are players $6,7,8,14,15$.  Assign 
these players items $1$ through $5$.  Assign the remaining items 
arbitrarily to the remaining players for day $3$.  

For day $t=4$, we have $\lceil \frac{15}{4}\rceil = 4$.  
For the first $3$ days, the players assigned one of the items 
$1$ through $4$ are $12$ unique players.  For day $4$, choose 
the $3$ players who are not in this group of $12$ players, 
and assign items $1$ through $3$ to these players.  
These will be players $5,13,15$.  
Assign the remaining items arbitrarily for day $4$.  

For day $t=5$, we have $\lceil \frac{15}{5}\rceil = 3$.  
A key point is that the group of players assigned one of 
items $1$ through $3$ on the first $4$ days are unique.  
There are $3$ players outside this group.  
Given the previous assignments, these will be
players $4,12,14$.  Assign items 
$1$ through $3$ to these $3$ players on day $5$.  
Assign the remaining items to the remaining players 
arbitrarily. 

For day $t=6$, we have $\lceil \frac{15}{6}\rceil = 3$.  
Every player has already been assigned one of items $1$ through $3$.  
However, we still need some care in assigning items on day $6$.  
Choose $2$ players that were not assigned items $1$ or $2$, 
and assign them items $1$ and $2$ on day $6$.  
These could be players $3$ and $11$.  Assign the 
remaining items arbitrarily.  

For day $t=7$, we still have $\lceil \frac{15}{7}\rceil = 3$.  
There are at least $2$ players who were not assigned items 
$1$ or $2$ on the first $6$ days.  Assign items $1$ and $2$ 
to these players on day $7$.  These could be players 
$8$ and $15$.  

On day $t=8$, there is one player not previously 
assigned items $1$ or $2$.  Based on previous assignments, 
this would be player $14$.  
Assign item $1$ to this player.  The remaining items may be 
assigned arbitrarily on day $8$.

Every player has been assigned either item $1$ or item $2$ 
after $8$ days.  In particular, $8$ different players were 
assigned item $1$.  Arbitrarily order the remaining $7$ 
players and assign item $1$ to each of the $7$ players 
on each of the remaining days.  
The remaining $7$ players could be ordered as 
$$2,10,7,13,12,8,14.$$

This procedure can be applied to an arbitrary positive integer $n$.  
The key property is that at each stage 
all players are assigned one of the top $\lceil \frac{n}{t}\rceil$ 
items in the first $t$ days.  If $\frac{n}{t}$ is an integer, 
then these players will be unique.  Moving to the next day, 
we pick the players that have not been assigned any items from 
$1$ through $\lceil \frac{n}{t+1}\rceil$ and assign those items 
to these players on day $t+1$.  
As soon as those players have been exhausted, 
we focus on the players that have not been assigned any items 
from $1$ through $\lceil \frac{n}{t+2}\rceil$ and so on.  
This procedure will produce a top-balanced sequence. 

\newpage

\section{Heterogeneous valuations}
\label[appendix]{app:het-valuations}
Throughout the paper we assumed for simplicity that all players have the same ranking and the same value function. Here we prove that our main positive result, namely  \Cref{thm:weak-full-balance-implies-ordinal-prop1}, holds even when players have different valuations and different rankings.

\begin{theorem}
	\label{thm:weak-full-balance-implies-ordinal-prop1-het}
	A weakly fully-balanced permutation sequence generates a perpetually ordinally PROP1 repeated assignment,
	even when players have different rankings and different valuations.
\end{theorem}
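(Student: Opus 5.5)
The plan is to reduce the heterogeneous case to the common-valuation case, \Cref{thm:weak-full-balance-implies-ordinal-prop1}, separately for each player. I will read the permutation sequence as a picking sequence, as in the Identical Order Transformation of \Cref{sec:setting}. On each day, the player holding position $p$ picks his favorite item among those still remaining. I fix a day $t$, a player $i$ with valuation $v_i$, and i's ranking $g_1,g_2,\ldots,g_n$ of the $n$ daily items, where $g_1$ is i's favorite. I then define the ordinal valuation on positions by $w_k := v_i(g_k)$ for $k\in[n]$. This gives $w_1\ge w_2\ge\cdots\ge w_n$, possibly with ties.

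\textbf{Step 1 (picking dominance).} On a day when $i$ holds position $p$, only $p-1$ items have been taken before $i$ picks. So at least one of $g_1,\ldots,g_p$ is still available, and $i$ receives an item of value at least $w_p$. This argument does not depend on the sign of the values, so it covers goods and chores alike. Denote by $P_i^t$ the multiset of positions held by $i$ up to day $t$. Then each actual item of $i$ can be matched to the position of the day it was picked, with $v_i(\text{item})\ge w_{\text{position}}$. Moreover $v_i(Z_N^t)=t\sum_k w_k = w(\text{all positions up to day }t)$, because every day all $n$ items are allocated.

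\textbf{Step 2 (apply the common-valuation theorem).} The sequence of positions is weakly fully-balanced. I will therefore apply the proof of \Cref{thm:weak-full-balance-implies-ordinal-prop1} to the position bundle $P_i^t$ under the valuation $w$. That proof only builds a bijection $f$ with $x\le f(x)$ and uses $w(x)\ge w(f(x))$, so it remains valid for weakly decreasing $w$ as well. The result is
\[
w\bigl(P_i^t\setminus\{p_{\max}\}\cup\{1\}\bigr)\ \ge\ v_i(Z_N^t)/n ,
\]
where $p_{\max}$ is the worst position $i$ held up to day $t$.

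\textbf{Step 3 (translate back to items).} Let $x$ be the item $i$ received on the day of $p_{\max}$. I will set $C''=\{x\}$ and $C'=\{g_1\}$. By Step 1, the remaining items of $i$ are worth at least the $w$-values of their positions, and $v_i(g_1)=w_1$. Hence
\[
v_i\bigl(Z_i^t\setminus\{x\}\cup\{g_1\}\bigr)\ \ge\ w\bigl(P_i^t\setminus\{p_{\max}\}\cup\{1\}\bigr)\ \ge\ v_i(Z_N^t)/n ,
\]
which is the PROP1 condition of \Cref{def:propc} for $i$ at day $t$.

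\textbf{Main obstacle.} The delicate point is legality of the swap in \Cref{def:propc}: the added copy of $g_1$ must be one not already in $Z_i^t$. Only $t$ copies of $g_1$ have been allocated up to day $t$. If $i$ already holds all of them, then $v_i(Z_i^t)=t\,w_1\ge t\cdot\frac1n\sum_k w_k = v_i(Z_N^t)/n$, so PROP1 holds with empty $C',C''$. Otherwise some copy of $g_1$ lies outside $Z_i^t$ and can be added.

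I also need to check carefully that \Cref{thm:weak-full-balance-implies-ordinal-prop1} tolerates ties in $w$. It suffices to note that its proof uses only the bijection with $x\le f(x)$, so weak monotonicity of $w$ is enough. Alternatively, one can perturb $w$ to strictly decreasing values and pass to the limit.
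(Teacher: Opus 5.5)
Your proposal is correct and follows essentially the same route as the paper: read the entries as picking rights, bound the value of each picked item from below by $v_i$ of the corresponding rank, and rerun the clone-bijection argument of \Cref{thm:weak-full-balance-implies-ordinal-prop1} on the multiset of rights, using the add-right-$1$/remove-worst-right modification. Two points in your version go slightly beyond the paper, which passes over them silently: the edge case where $i$ already holds all $t$ copies of $g_1$ (so the added copy must come from outside $Z_i^t$), and the remark that the bijection only needs $w$ to be weakly decreasing.
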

\begin{proof}
As different players may rank the items differently, we can no longer identify a number $j$ with ``the $j$-th highest ranked item''. Instead, we interpret the entries of the permutation sequence as \emph{picking rights}: the number $j$ received by a player on a given day is \emph{the right to be the $j$-th one to pick an item} on that day. That is, on each day the players pick items one at a time, in increasing order of their number, so that the player holding right $1$ picks first, the player holding right $2$ picks second, and so on. A player holding right $j$ always gets an item that is \emph{at least as good, from that player's own viewpoint}, as their $j$-th best item. Hence, if the player's valuation is $v_i$, the value they obtain from a single pick with right $j$ is at least $v_i(j)$, where we rename the items so that $v_i(1)\geq \cdots \geq v_i(n)$.
	
	Crucially, the weak-balance condition constrains the picking rights only, and is independent of any player's valuation. For a player $i$ and a day $t$, let $P_i^t$ denote the multiset of picking rights that $i$ has accumulated over days $1,\ldots,t$, and let $P_i^t[j]$ denote the $j$-th smallest (that is, $j$-th best) right in $P_i^t$. In this notation, $\weakpropcond(t,j)$ states that
	\[
	P_i^t[j] \leq \floor{j n / t + 1}
	\qquad \text{for all } j\in[t].
	\]
	
	We now fix a day $t$ and a player $i$ with valuation $v_i$, rename the items as above 
	so that $v_i(1)\geq \cdots \geq v_i(n)$, and prove that the cumulative allocation up to day $t$ satisfies PROP1 for $i$.
	
	So far, $n t$ picking rights have been allocated in total, so $|Z_N^t| = n t$, where $Z_N^t$ now denotes the multiset of all allocated rights. Since each day is a permutation of the rights $1,\ldots,n$, the multiset $Z_N^t$ contains exactly $t$ clones of every right in $1,\ldots,n$. Because the actual item allocated for right $j$ is worth at least $v_i(j)$ to player $i$, it suffices to bound $i$'s value from below by the value of the rights: writing $v_i(P_i^t) := \sum_{p\in P_i^t} v_i(p)$, the true value of $i$'s bundle is at least $v_i(P_i^t)$, while the true value of all allocated items equals $v_i(Z_N^t) = t\sum_{k=1}^{n} v_i(k)$, which is exactly the value obtained by reading $Z_N^t$ as a multiset of rights. Hence it is enough to establish PROP1 at the level of rights.
	
	Let us add to $P_i^t$ a copy of the best right ($1$) and remove its worst (highest-numbered) right, so that the modified multiset still has $t$ rights overall. Denote this new multiset $C_i$. Adding right $1$ corresponds to adding a single most-preferred item to $i$'s bundle, and removing the worst right corresponds to removing a single item; this is a valid PROP1 add-one/remove-one modification. To show PROP1, it is therefore sufficient to show that $v_i(C_i)\geq v_i(Z_N^t)/n$.
	
	Construct a new multiset $D_i$, containing $n$ clones of each right in $C_i$; now it is sufficient to show that $v_i(D_i)\geq v_i(Z_N^t)$. We do this by exhibiting a bijection $f: D_i \to Z_N^t$ such that $x \leq f(x)$ for all $x\in D_i$; since $v_i$ is non-increasing in the right index, this gives $v_i(x)\geq v_i(f(x))$ for all $x\in D_i$, and summing yields the claim.
	
	We first show the bijection for the special case that $n/t$ is an integer.
	\begin{itemize}
		\item Map the $n$ clones of right $1$ in $D_i$ (the added right) to the $t$ clones of rights $1,2,\ldots,n/t$ in $Z_N^t$.
		
		\item Next, map the $n$ clones of the best right held by $i$, namely $P_i^t[1]$. By $\weakpropcond(t,j=1)$, $P_i^t[1]\leq \floor{n/t+1} = n/t + 1$. Hence we can map these clones to the $t$ clones of rights $n/t+1,\, n/t+2,\ldots,2 n/t$ in $Z_N^t$.
		
		\item Similarly, by $\weakpropcond(t,j)$ for all $j\in[t-1]$,
		$P_i^t[j]\leq j n /t + 1$; hence we can map the $n$ clones of $P_i^t[j]$ to the $t$ clones of rights $j n/t+1,\, j n/t+2,\ldots,(j+1) n/t$ in $Z_N^t$.
	\end{itemize}
	
	When $n/t$ is not an integer, the mapping works as follows:
	
	\begin{itemize}
		\item The $n$ clones of right $1$ in $D_i$ are mapped to the $t$ clones of rights $1,2,\ldots,\floor{n/t}$ in $Z_N^t$, as well as to some $n \% t$ clones of right $\floor{n/t+1}$ (here $\%$ denotes the integer remainder operation).
		
		\item The $n$ clones of $P_i^t[1]$ in $D_i$, which is at most $\floor{n/t+1}$ by $\weakpropcond(t,j=1)$, are mapped to the remaining $t-n \% t$ clones of right $\floor{n/t+1}$ in $Z_N^t$, as well as to the $t$ clones of $\floor{n/t+2}$ and some lower-ranked rights.
		Overall, the $2n$ clones of $1$ and $P_i^t[1]$ in $D_i$ are mapped to the $t$ clones of rights $1,\ldots,\floor{2n/t}$ as well as to some clones of $\floor{2n/t+1}$ in $Z_N^t$.
		
		\item Similarly, for all $j\in[t-1]$, the $(j-1)n$ clones of $1, P_i^t[1],\ldots,P_i^t[j-1]$ in $D_i$ are mapped to the $t$ clones of rights $1,\ldots, \floor{j n/t}$, as well as to some clones of $\floor{j n / t+1}$ in $Z_N^t$.
		By $\weakpropcond(t,j)$, $P_i^t[j]\leq \floor{j n /t + 1}$, so the clones of $P_i^t[j]$ can be mapped to the remaining clones of $\floor{j n /t + 1}$ and to lower-ranked rights.
	\end{itemize}
	
	\begin{table}
		\begin{center}
			\begin{tabular}{cc}
				\begin{minipage}{.5\linewidth}
					\begin{tabular}{|p{5mm}|p{5mm}|p{5mm}|p{5mm}|p{5mm}|p{5mm}|}
						\hline
						1 & 1 & 1 & 1 & 1 & 1\\
						\hline
						2 & 2 & 2 & 2 & 2 & 2 \\
						\hline
						3 & 3 & 3 & 3 & 3 & 3 \\
						\hline
						4 & 4 & 4 & 4 & 4 & 4\\
						\hline
						5 & 5 & 5 & 5 & 5 & 5\\
						\hline
						6 & 6 & 6 & 6 & 6 & 6\\
						\hline
						7 & 7 & 7 & 7 & 7 & 7 \\
						\hline
						8 & 8 & 8 & 8 & 8 & 8\\
						\hline
						9 & 9 & 9 & 9 & 9 & 9\\
						\hline
						10 & 10 & 10 & 10 & 10 & 10\\
						\hline
					\end{tabular}
					\\
					\centering
					$Z_N^t$
				\end{minipage}	
				&  
				\begin{minipage}{.5\linewidth}
					\begin{tabular}{|p{5mm}|p{5mm}|p{5mm}|p{5mm}|p{5mm}|p{5mm}|}
						\hline
						1  & 1 & 1 & 1 & 1 & 1\\
						\hline
						1 &  1 & 1 & 1 & 2 & 2\\
						\hline
						2 &  2 & 2 & 2 & 2 & 2 \\
						\hline
						2 &  2 & 4 & 4 & 4 & 4 \\
						\hline
						4 &  4 & 4 & 4 & 4 & 4 \\
						\hline
						6 &  6 & 6 & 6 & 6 & 6\\
						\hline
						6 & 6  & 6  & 6 & 7 & 7\\
						\hline
						7 & 7  & 7  & 7 & 7 & 7 \\
						\hline
						7 & 7  & 9  & 9 & 9 & 9 \\
						\hline
						9 & 9  & 9  & 9 & 9 & 9 \\
						\hline
					\end{tabular}
					\\
					\centering
					$D_i$
				\end{minipage}	
			\end{tabular}
		\end{center}
		\caption{
			\label{tab:weakprop-implies-prop1-het}
			Example for the proof of \Cref{thm:weak-full-balance-implies-ordinal-prop1-het}.
			Here $n=10,t=6$, and the entries are picking rights.
		}
	\end{table}
	
	For illustration, \Cref{tab:weakprop-implies-prop1-het} shows the rights in $Z_N^t$ (left), and the rights $1$ and $\floor{j n / t + 1}$ for $j\in[t-1]$ in $D_i$ (right) for $n=10,t=6$. One can see that each right in the $D_i$ table is at least as good as the right at the same location in the $Z_N^t$ table, as required.
\end{proof}

\end{document}